\numberwithin{equation}{section}
\theoremstyle{plain}
\newtheorem{theorem}{Theorem}[section]
\newtheorem{corollary}[theorem]{Corollary}
\newtheorem{fact}[theorem]{Fact}
\newtheorem{lemma}[theorem]{Lemma}
\newtheorem{prop}[theorem]{Proposition}
\theoremstyle{definition}
\newtheorem{remark}[theorem]{Remark}
\newtheorem{definition}[theorem]{Definition}
\newtheorem{convention}[theorem]{Convention}
\newcommand{\nc}{\newcommand}
\nc{\K}{\ensuremath{\mathcal{K}}\xspace}
\nc{\T}{\ensuremath{T^\mu}\xspace}
\nc{\C}{\ensuremath{\mathbb{C}}\xspace}
\nc{\Cn}[1]{(K^*)^{#1}}
\nc{\Gm}{\mathbb{G}_{\mathrm{m}}}
\nc{\inv}[1]{\cdot {#1}^{-1}}
\nc{\Q}{\ensuremath{\mathbb{Q}}\xspace}
\nc{\U}{\operatorname{\ddot{U}}}
\nc{\Ur}{\operatorname{\mathrm{U}}}
\nc{\Th}{\operatorname{Th}}
\nc{\GrTor}{\nu}
\nc{\Lan}{\mathcal{L}}
\nc{\D}{\mathcal{D}}
\nc{\stab}{\operatorname{stab}}
\renewcommand{\phi}{\varphi}
\nc{\acl}{\operatorname{acl}} 
\nc{\acle}{\operatorname{acl\e}}
\nc{\dcl}{\operatorname{dcl}} 
\nc{\dcle}{\operatorname{dcl\e}}
\nc{\dd}{\operatorname{d}} 
\nc{\cl}{\operatorname{cl}}
\nc{\erz}[1]{\langle #1\rangle} 
\nc{\eps}{\varepsilon}
 \nc{\ra}{r(\alpha)}
\nc{\ma}{m_\alpha}
\nc{\na}{n_\alpha}
\nc{\ka}{k_{\alpha}}
\nc{\eq}{\stackrel{.}{=}}
\nc{\tp}{\operatorname{tp}}
\nc{\stp}{\operatorname{stp}}
\nc{\alg}{\mathrm{alg}}
\nc{\ACF}{\mathrm{ACF}}
\nc{\ldim}{\operatorname{l.dim}}
\nc{\tr}{\operatorname{dim}}
\nc{\trdeg}{\operatorname{tr.d}}
\newcommand{\Tor}{\operatorname{Tor}}
\newcommand{\strong}{\leq}
\nc{\cd}{\operatorname{cd}}
\nc{\x}{\bar x}
\nc{\y}{\bar y}
\nc{\z}{\bar z}
\nc{\e}{\bar e}
\nc{\f}{\bar f}
\nc{\m}{\bar m}
\nc{\ba}{\bar a}
\nc{\bb}{\bar b}
\nc{\bg}{\bar g}
\nc{\Land}{\bigwedge}
\nc{\Lor}{\bigvee}
\newcommand{\Gl}[1]{\mathrm{GL}_{\mathrm{#1}}(\Q)}
\newcommand{\Clc}{\mathcal{C}}
\newcommand{\Cl}{\mathcal{K}}
\nc{\Clm}{\ensuremath{\K^\mu}\xspace}
\newcommand{\Z}{\mathbb{Z}}
\newcommand{\N}{\mathbb{N}}
\newcommand{\Mr}{\operatorname{MR}}
\def\Ind#1#2{#1\setbox0=\hbox{$#1x$}\kern\wd0\hbox to 0pt{\hss$#1\mid$\hss}
\lower.9\ht0\hbox to 0pt{\hss$#1\smile$\hss}\kern\wd0}
\def\Notind#1#2{#1\setbox0=\hbox{$#1x$}\kern\wd0\hbox to 0pt{\mathchardef
\nn="3236\hss$#1\nn$\kern1.4\wd0\hss}\hbox to 0pt{\hss$#1\mid$\hss}\lower.9\ht0
\hbox to 0pt{\hss$#1\smile$\hss}\kern\wd0}
\def\nind{\mathop{\mathpalette\Notind{}}}
\begin{document}

\bibliographystyle{plain}

\title{Bad fields with torsion}
\author{Juan Diego Caycedo} 
\address{Mathematisches Institut, Albert-Ludwigs-Universit\"at Freiburg, Eckerstr. 1, 79104 Freiburg, Deutschland.}
\email{juan-diego.caycedo@math.uni-freiburg.de}

\author{Martin Hils} 
\address{Univ Paris Diderot, Sorbonne Paris Cit\'e, UMR 7586, Institut de Math\'ematiques de Jussieu -- Paris Rive Gauche, B\^{a}timent Sophie Germain, Case 7012, F-75213 Paris, France.\newline\indent 
\'Ecole Normale Sup\'erieure, D\'epartement de math\'ematiques et applications, 45 rue d'Ulm, 75230 Paris Cedex 05, France (UMR 8553 du CNRS).}
\email{hils@math.univ-paris-diderot.fr}
\thanks{The second author was partially supported by ANR-13-BS01-0006-01 ValCoMo and by ANR-09-BLAN-0047 Modig, which also funded a visit of the first author to Paris during which part of this work was carried out.}

\date{\today}
\keywords{Model Theory, Fields of finite Morley
Rank\\ \indent 2010 \emph{Mathematics Subject Classification.} Primary: 03C65; Secondary:03C50} 
\def\abstractname{Abstract}
\begin{abstract}We extend the construction of bad fields of characteristic zero to the case of arbitrary prescribed divisible green torsion.\end{abstract}

\maketitle

\section{Introduction}
In this note, we construct bad fields in characteristic 0 with arbitrary prescribed divisible green torsion. 
For this, we prove that the free amalgamation class is axiomatisable and show that the collapse 
to a bad field may then be performed exactly as in \cite{BaHiMaWa09}. 

The axiomatisability of the free amalgamation class was first proved in the doctoral thesis of the first author (\cite{CayThesis}), also in the case where the green points form a subgroup of an elliptic curve and with any finite-rank green subgroup in place of the green torsion. We include the proof in the relevant case, written in a way that is consistent with the presentations in \cite{Poi01} and \cite{BaHiMaWa09}.

In the part on the collapse we will rely heavily on the results of \cite{BaHiMaWa09} (and their proofs), only indicating at some key steps the necessary changes when allowing green torsion points. 
Roche had observed a gap in the proof of the collapse in \cite{BaHiMaWa09}, related to choices of green roots. The second author addressed this issue in \cite{Hil12}, showing that Kummer 
genericity is a definable property and proposing improved codes which take into account 
Kummer genericity. In the present paper, we seize the opportunity to spell out the points in the construction of the bad fields where the use of the improved codes is essential.

Recall that a \emph{bad field} is a field of finite Morley rank equipped with a definable proper infinite subgroup 
of the multiplicative group of the field. Their study originated in connection with the Cherlin-Zilber Algebraicity Conjecture which asserts that a simple (infinite) group of finite Morley rank must be an algebraic group. 

By a result of Wagner \cite{Wag03} the existence of a bad field of characteristic $p>0$ is very unlikely, as it would imply that there are only finitely many $p$-Mersenne 
primes, i.e.\ primes of the form $\frac{p^n-1}{p-1}$. Baudisch, Martin-Pizarro, Wagner and the second author constructed a bad field of characteristic 0, thus answering a long-standing 
open question of Zilber. More precisely, it is shown in \cite{BaHiMaWa09} that Poizat's \emph{green 
field}, an infinite rank analogue of a bad field which Poizat had obtained in \cite{Poi01} using Hrushovski's 
amalgamation method, may be collapsed to a structure of finite Morley rank. 

Following Poizat, we call the elements from the distinguished
multiplicative subgroup the \emph{green points} of the field.  In
Poizat's green fields, as well as in the bad fields from
\cite{BaHiMaWa09}, the green points form a divisible torsion-free
group. In the case of arbitrary prescribed divisible green torsion $\GrTor$,
Poizat shows that the construction can be carried out provided the
free amalgamation class is axiomatisable, and he proves that this
is the case under the assumption that the CIT, a conjecture of Zilber
on unlikely intersections, holds. We show that the free amalgamation class is axiomatisable for arbitrary prescribed divisible green torsion, unconditionally. In the proof, the use of the CIT is replaced by applications of a weaker proven statement, known as the Weak CIT, and a theorem of Laurent.

An overview of the paper. In Section \ref{S:Axiom}, we prove the axiomatisability of the free amalgamation class. We then gather the material needed from \cite{BaHiMaWa09}, with a particular emphasis on the construction of the improved codes, in Section \ref{S:Bad-Context}. The 
main results are given in Section \ref{S:Proof} where we construct both (infinite rank) green fields and bad fields with green torsion equal to $\GrTor$. We also include a complete proof of the axiomatisability of existential closedness, illustrating how the improved codes are used. 

The first author would like to thank Martin Bays, Zoé Chatzidakis, Philipp Hieronymi and Boris Zilber for very helpful comments on the topic of Section~\ref{S:Axiom}.

\section{Axiomatisability of the free amalgamation class} \label{S:Axiom}
Let $\Tor$ denote the group of roots of unity in $\Q^{\alg}$. Let us fix a divisible subgroup $\GrTor$ of $\Tor$.

Let $\Lan$ be the expansion of the language of rings by a unary predicate $\U$. Let $\Cl$ be the class of $\Lan$-structures $(K,+,-,\cdot,0,1,\U)$ satisfying the following conditions:
\begin{enumerate}[(i)]
\item $(K,+,-,\cdot,0,1)$ is an algebraically closed field of characteristic 0,
\item $\U$ is a divisible subgroup of $(K^*, \cdot)$,
\item the group of torsion elements of $\U$ is isomorphic to $\GrTor$,
\item for all $n \geq 1$ and all $y \in \U^n$, the value $\delta(y):= 2 \trdeg(y) - \ldim(y)$ is non-negative. 
\end{enumerate}

Here, $\trdeg(y)$ denotes the transcendence degree of the field $\Q(y)$ over $\Q$, and $\ldim(y)$ denotes linear dimension of the subspace generated by $y$ in the $\Q$-vector space $K^*/\Tor$.

We show below that $\Cl$ is an elementary class. In \cite{Poi01} the same result is proved assuming the Conjecture on Intersections with Tori (CIT) (cf. \cite[Corollaire 3.5]{Poi01}) and unconditionally only in the case where $\GrTor$ is trivial. The idea of replacing the use of the CIT by a combination of the Weak CIT and Laurent's theorem, Facts \ref{WCIT} and \ref{ML} below, comes from \cite{Zil11}.

In the definitions and facts below, $K$ denotes an algebraically closed field of characteristic 0.


Let us fix some notation. For every $n$, $(K^*)^n$ is an algebraic group under coordinate-wise multiplication. For $m = (m_1,\dots,m_n) \in \Z^n$ and $y = (y_1,\dots,y_n) \in (K^*)^n$, let  
$y^m := \prod_{i} y_i^{m_i}$. 
More generally, for a $k \times n$-matrix $M$ over $\Z$ with rows $M_1, \dots, M_k$ and $y = (y_1,\dots,y_n) \in (K^*)^n$, let $y^M := (y^{M_1}, \dots, y^{M_k})$.
The map from $(K^*)^n$ to $(K^*)^k$ given by $y \mapsto y^M$ is a homomorphism of algebraic groups. Its kernel, the subset of $(K^*)^n$ defined by the system of equations $y^M = 1$, is thus an algebraic subgroup of $(K^*)^n$. All algebraic subgroups of $(K^*)^n$ are of this form. If the matrix $M$ has rank $k$, then the corresponding algebraic subgroup $T$ has dimension $n-k$ as a Zariski closed set. For the dimension of $T$ we may use the notation $\dim (T)$ or $\ldim (T)$. 
A \emph{torus} is a connected algebraic subgroup of $(K^*)^{n}$.


A variety $V$ will always be a closed algebraic subvariety of some $(K^*)^{n}$ (not necessarily irreducible). 
Given an irreducible subvariety $V$ of $(K^*)^{n}$, its
\emph{minimal torus} is the smallest torus $T$, such that $V$ lies in
some coset $\ba\cdot T$.  In this case, we define
the \emph{codimension} of $V$ by $\cd(V):=\dim(T)-\dim(V)=\ldim(V)-\dim(V)$,
where $\ldim(V):=\dim(T)$. An irreducible subvariety $W\subseteq V$ is
called \emph{$\cd$-maximal in $V$} if $\cd(W')>\cd(W)$ for every irreducible variety $W'$ 
such that $W\subsetneq W'\subseteq V$.


We now state a variant of Zilber's \emph{Weak CIT} \cite{Zil02} due to Poizat \cite[Corollaire 3.7]{Poi01}:

\begin{fact} \label{WCIT} Let $V(\x,\z)$ be a uniformly definable family of varieties in $(K^*)^{n}$. There exists a finite collection of tori $\{T_0,\dots,T_s\}$, such that for every $\bb$ the minimal torus of every $\cd$-maximal subvariety of $V_{\bb}$ belongs to the collection $\{T_0,\dots,T_s\}$.
\end{fact}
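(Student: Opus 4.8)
The plan is to derive the statement from the (uniform version of the) structure theorem for atypical intersections in $(K^*)^n$; the model-theoretic content is only a compactness argument that repackages the geometry as a statement about the definable family $\{V_{\bb}\}$.

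The first step is a routine reduction. Suppose $W$ is a $\cd$-maximal subvariety of $V_{\bb}$ with minimal torus $T$, and let $\ba T$ be the coset of $T$ containing $W$. Then $W$ is an irreducible component of $V_{\bb}\cap\ba T$: if $W\subsetneq W'\subseteq V_{\bb}\cap\ba T$ with $W'$ irreducible, then $\ldim(W')\le\dim(T)=\ldim(W)$ while $\dim(W')>\dim(W)$, hence $\cd(W')<\cd(W)$, contradicting the $\cd$-maximality of $W$ in $V_{\bb}$. So every torus we must control occurs as the minimal torus of an irreducible component of the intersection of some $V_{\bb}$ with a coset of that torus.

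Next I would invoke the rigidity of tori. For each $D$ there are only finitely many tori $T\le(K^*)^n$ of degree at most $D$: a torus corresponds to a primitive sublattice of $\Z^n$ (equivalently a $\Q$-subspace of $\Q^n$), and its degree grows with the covolume of that lattice, of which there are finitely many of bounded size. On the other hand, the minimal torus of an irreducible variety $W$ is the Zariski closure of the subgroup of $(K^*)^n$ generated by a translate $\ba^{-1}W$ (with $\ba\in W$), so B\'ezout-type estimates for the group law give $\deg(\text{minimal torus of }W)\le\deg(W)^{O(n)}$. Since degrees — and the number and degrees of the irreducible components — in the definable family $\{V_{\bb}\}$ are uniformly bounded, this settles the case where $W$ is itself an irreducible component of $V_{\bb}$; in particular, replacing $V(\x,\z)$ by the definable family of the irreducible components of the $V_{\bb}$, we may assume each $V_{\bb}$ irreducible and treat only $\cd$-maximal $W\subsetneq V_{\bb}$.

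For such a $W$, with minimal torus $T$ of dimension $d<n$, the degree of $W$ need not be bounded, so the estimate above no longer applies. A partial reduction is to pick a hyperplane torus $S\supseteq T$ and the coset $\ba'S$ containing $W$: codimension is computed identically inside $(K^*)^n$ and inside $\ba'S\cong(K^*)^{n-1}$, and $W$ stays $\cd$-maximal inside $V_{\bb}\cap\ba'S$ with the same minimal torus $T$, which suggests an induction on $n$; but the family of all slices $V_{\bb}\cap\ba'S$ is not definable, since $S$ ranges over infinitely many subtori. The genuine difficulty, and the crux of the matter, is the following: for a \emph{fixed} torus $T$ the intersections $V_{\bb}\cap\ba T$ do form a definable family in $\bb$ and the coset representative, and generic flatness confines the cosets along which a component of atypically large dimension can appear to a proper closed ``jump locus'' in each fiber; organising this over the relevant subtori — a process a dimension count keeps finite — then bounds the set of tori that can occur. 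This is precisely the content of Zilber's Weak CIT, proved via the Bombieri--Masser--Zannier structure theorem for anomalous subvarieties, and it is the one point where the geometry of subtori of $(K^*)^n$ — their rigidity, and the behaviour of fiber dimensions under the quotient maps $(K^*)^n\to(K^*)^n/T$ — is used in an essential, non-formal way. Once the finite list $\{T_0,\dots,T_s\}$ has been produced, uniformity over $\bb$ is immediate; the only model-theoretic inputs throughout are the tameness of definable families of varieties in $\ACF_0$ (bounded degrees, definability of irreducible components) and compactness.
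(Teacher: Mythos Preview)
The paper does not prove this statement at all: it is recorded as a \emph{Fact}, attributed to Zilber and to Poizat (Corollaire~3.7 of his paper), and simply invoked. So there is no argument to compare your proposal against.

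As for the proposal itself, your opening reduction (that a $\cd$-maximal $W\subseteq V_{\bb}$ with minimal torus $T$ is an irreducible component of $V_{\bb}\cap\ba T$) is correct and standard, and the observation that the minimal torus of an irreducible component of $V_{\bb}$ has degree controlled by $\deg V_{\bb}$ is fine. But once you reach the genuine content --- controlling the minimal tori of \emph{proper} $\cd$-maximal subvarieties --- you do not give an argument: you write that ``this is precisely the content of Zilber's Weak CIT'' and defer to the literature. Since the statement being proved \emph{is} (Poizat's formulation of) the Weak CIT, this is circular, or at best an annotated citation rather than a proof. If your intention was to indicate where the result comes from, that is reasonable, and in line with what the paper does; but then the preceding paragraphs of reductions are not really needed.

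One point of attribution: the Weak CIT is not proved via the Bombieri--Masser--Zannier structure theorem for anomalous subvarieties. Zilber's original argument (and Poizat's) goes through Ax's differential-algebraic Schanuel theorem; the BMZ work on anomalous subvarieties is later and, while thematically related, addresses somewhat different questions. If you want to sketch an actual proof, the route is: Ax's theorem gives, for each irreducible $V$, a finite set of tori $T$ such that every atypical component of $V\cap(\text{coset})$ is contained in a coset of some $T$; one then iterates and uses compactness (or direct uniformity in $\ACF_0$) to pass to families.
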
 

Henceforth we denote by $\mathcal{T}(V(\x,\z))$ a collection $\{T_0,\dots,T_s\}$ of tori as provided by the above fact, with $T_0=(K^*)^{n}$ and $T_1=\{1\}^n$. 

The following is Laurent's theorem from \cite{Laurent84} on intersections of subvarieties of $(K^*)^n$ with a finite rank subgroup, in the case where the subgroup is $\Tor^n$.

\begin{fact} \label{ML}
For every proper algebraic subvariety $W$ of $(K^*)^{n}$, there exist proper algebraic subgroups $H_1,\dots,H_r$ of $(K^*)^n$ and $\gamma_1,\dots,\gamma_r \in \Tor^n$ such that 
\[
W \cap \Tor^n = \bigcup_{j=1}^r \gamma_j (H_j \cap \Tor^n).
\]
\end{fact}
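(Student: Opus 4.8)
This is Laurent's theorem; here is a sketch of one proof. The plan is to reduce first to the case where $W$ is defined over a number field, then to the case of a single equation, and finally to invoke Mann's theorem on vanishing sums of roots of unity.

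\emph{Reductions.} Since $\Tor^{n} \subseteq (\Q^{\alg})^{n}$, every point of $W \cap \Tor^{n}$ is algebraic, so I would replace $W$ by the Zariski closure $W'$ of $W \cap \Tor^{n}$: this $W'$ is defined over $\Q^{\alg}$, hence over a number field $k$, is again a proper subvariety of $(K^{*})^{n}$, and satisfies $W' \cap \Tor^{n} = W \cap \Tor^{n}$. Next, writing the ideal of $W'$ with finitely many nonzero generators $f_{1}, \dots, f_{s} \in k[y_{1}^{\pm 1}, \dots, y_{n}^{\pm 1}]$ gives $W' \cap \Tor^{n} = \bigcap_{j} (V(f_{j}) \cap \Tor^{n})$; since a finite intersection of finite unions of torsion cosets is again one (because $\gamma_{1} H_{1} \cap \gamma_{2} H_{2}$ is either empty or a coset of $H_{1} \cap H_{2}$), it is enough to treat $W = V(f)$ for a single nonzero $f = \sum_{i=1}^{m} a_{i} y^{\alpha_{i}}$ with $a_{i} \in k^{*}$ and distinct exponents $\alpha_{i} \in \Z^{n}$.

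\emph{Applying Mann's theorem.} The substantive input is Mann's theorem: there is an integer $N = N(m, [k : \Q])$ such that whenever $\sum_{i \in S} a_{i} \omega_{i} = 0$ for some $S \subseteq \{1, \dots, m\}$ and roots of unity $\omega_{i}$, and no proper nonempty subsum vanishes, then $(\omega_{i} / \omega_{j})^{N} = 1$ for all $i, j \in S$. Given $\zeta \in V(f) \cap \Tor^{n}$, the relation $\sum_{i} a_{i} \zeta^{\alpha_{i}} = 0$ is a $k$-linear dependence among the roots of unity $\zeta^{\alpha_{i}}$ --- here it is essential that the $a_{i}$ lie in a fixed number field --- so $\{1, \dots, m\}$ decomposes into minimal vanishing subsums $S_{1}, \dots, S_{t}$, one of the finitely many partitions of $\{1, \dots, m\}$, for each block of which I fix a representative $i_{r} \in S_{r}$ in advance. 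By Mann's theorem applied within each block, $\mu_{i} := \zeta^{\alpha_{i} - \alpha_{i_{r}}}$ is an $N$-th root of unity for $i \in S_{r}$, and $\sum_{i \in S_{r}} a_{i} \mu_{i} = \zeta^{-\alpha_{i_{r}}} \sum_{i \in S_{r}} a_{i} \zeta^{\alpha_{i}} = 0$. Hence $\zeta$ lies in the coset $\{ y : y^{\alpha_{i} - \alpha_{i_{r}}} = \mu_{i} \text{ for } r \le t,\ i \in S_{r} \}$ of the algebraic subgroup $H := \{ y : y^{\alpha_{i} - \alpha_{i_{r}}} = 1 \}$, and this coset depends only on the partition and on the tuple $(\mu_{i})$, which ranges over a finite set. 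Running the same computation backwards shows conversely that every point of that coset lies on $V(f)$. Therefore $V(f) \cap \Tor^{n}$ equals a union of finitely many torsion cosets $\gamma H$; and each subgroup occurring can be taken proper, because a torsion coset $\gamma H$ appearing in the union has Zariski-dense torsion, so $\gamma H \subseteq W \subsetneq (K^{*})^{n}$ and hence $H \neq (K^{*})^{n}$.

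\emph{Main obstacle.} Everything except Mann's theorem is routine bookkeeping; Mann's theorem on vanishing sums of roots of unity is the real ingredient, and it genuinely needs a bound on the degree of the field of coefficients --- for arbitrary complex coefficients it fails, as $\zeta \cdot 1 - 1 \cdot \zeta = 0$ shows --- which is exactly why the reduction to $W$ over $k$ comes first. An alternative, closer to Laurent's original argument, avoids Mann's theorem by using the Galois action on roots of unity: a Frobenius above a rational prime $p \nmid N$ sends a torsion point of order $N$ to a $p$-th power, so for suitable $p$ the orbit of $\zeta \in W \cap \Tor^{n}$ under $\mathrm{Gal}(\Q^{\alg} / k)$ has size $\gg \varphi(N)$ and lies in $W \cap \psi_{p}^{-1}(W)$, where $\psi_{p}$ is the endomorphism $y \mapsto y^{p}$ of $(K^{*})^{n}$; when $W$ is not a torsion coset this intersection has strictly smaller dimension, and a B\'ezout estimate for its degree, played off against the orbit lower bound, bounds $N$ and forces the torsion points of large order onto the finitely many positive-dimensional torsion cosets contained in $W$. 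There the delicate points are propagating degree bounds through the induction on $\dim W$ and the rigidity statement that only finitely many subtori can occur.
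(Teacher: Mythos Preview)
The paper does not prove this statement: it is recorded as a \emph{Fact}, attributed to Laurent \cite{Laurent84}, and used as a black box. So there is no ``paper's own proof'' to compare against; your sketch supplies what the paper deliberately omits.

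Your argument is correct. The reductions are clean: passing to the Zariski closure of $W\cap\Tor^n$ (defined over a number field since only finitely many algebraic coefficients appear), then to a single hypersurface via the observation that finite intersections of torsion cosets are again torsion cosets. The core step via Mann's theorem is sound; one small point worth making explicit is that every minimal vanishing block $S_r$ has size at least $2$ (since $a_i\neq 0$), which is exactly why the subgroup $H$ you obtain is automatically proper---you phrase this slightly differently at the end but it comes to the same thing. The dependence of $N$ on the field is indeed needed, and it is enough for your purposes that $N$ depend on $m$ and on the fixed tuple $(a_1,\dots,a_m)$, which is all the standard results guarantee. Your alternative Galois/Frobenius sketch is closer in spirit to Laurent's original approach and to the later treatments by Bombieri--Zannier and others; the Mann-theorem route you give first is the more elementary and self-contained one for the torsion case.
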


\begin{theorem}  \label{T:Torsion-Axioms}
The class $\Cl$ is elementary.
\end{theorem}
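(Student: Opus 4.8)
Conditions (i)--(iii) are routinely axiomatisable by first-order schemes: (i) is $\ACF_0$; (ii) consists of the sentences expressing that $\U$ is a subgroup of the multiplicative group, together with the divisibility axioms $\forall x\,(\U(x)\to\exists y\,(\U(y)\wedge y^m=x))$ for $m\geq1$; and, since a divisible abelian group containing an element of order $p$ contains a copy of $\Z(p^\infty)$, condition (iii) is, given (i) and (ii), equivalent to the scheme asserting for each prime $p$ the existence in $\U$ of an element of order $p$ exactly when $\Z(p^\infty)\leq\GrTor$. So everything comes down to axiomatising (iv) modulo (i)--(iii).

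The plan is to show that, modulo (i)--(iii), condition (iv) is equivalent to the scheme $(\ast)$ consisting, for every $\ell\geq1$ and every irreducible $\Q$-variety $V\subseteq(K^*)^\ell$ with $2\dim V<\ell$, of the sentence asserting that $\U^\ell\cap V$ contains no point that is multiplicatively independent modulo $\Tor$. (There is one sentence per such $\Q$-variety, which is legitimate: every rational is $\emptyset$-definable in $\ACF_0$, and irreducibility and dimension of a $\Q$-variety do not depend on the ambient model.) One implication is immediate: if $(K,\U)\in\Cl$ and some $y\in\U^\ell\cap V$ were multiplicatively independent modulo $\Tor$, then $\ldim(y)=\ell$ while $\trdeg(y)\leq\dim V<\ell/2$, whence $\delta(y)=2\trdeg(y)-\ell<0$, contradicting (iv). For the converse, suppose $(K,\U)$ satisfies (i)--(iii) and $(\ast)$ but has a bad tuple $y\in\U^n$, i.e.\ $\delta(y)<0$. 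Replace $y$ by a sub-tuple forming a basis of the $\Q$-subspace it generates in $K^*/\Tor$: passing to a sub-tuple cannot raise $\trdeg$, and this choice does not change $\ldim$, so the sub-tuple is still bad; writing $\ell$ for its length, its coordinates are multiplicatively independent modulo $\Tor$ (so $\ldim=\ell$) and $2\trdeg<\ell$. Its locus over $\Q$ is then an irreducible $\Q$-variety $V\subseteq(K^*)^\ell$ with $2\dim V=2\trdeg(y)<\ell$, and $y\in\U^\ell\cap V$ is multiplicatively independent modulo $\Tor$ --- contradicting the instance of $(\ast)$ for $V$. Note that $(\ast)$ already subsumes the ``rational'' axioms $\neg\U(q)$ for $q\in\Q^*$ not a root of unity (the case $\ell=1$, $V=\{q\}$).

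The substance of the theorem is therefore that each sentence of $(\ast)$ is genuinely first-order, and this is where Facts~\ref{WCIT} and~\ref{ML} are needed --- following \cite[Corollaire 3.5]{Poi01}, with the CIT replaced, as in \cite{Zil11}, by the Weak CIT together with Laurent's theorem. A point of $V$ \emph{fails} to be multiplicatively independent modulo $\Tor$ precisely when it lies in the intersection of $V$ with the union of the torsion cosets of proper algebraic subgroups of $(K^*)^\ell$. The Weak CIT, applied to a bounded-degree family containing $V$, confines the minimal tori of the $\cd$-maximal subvarieties of $V$ relevant here to a finite list; and Laurent's theorem --- together with $\dcl_{\ACF_0}(\emptyset)=\Q$, so that the constant value of a monomial map on the $\Q$-variety $V$ lies in $\Q$ and is therefore a root of unity exactly when its coordinates are each $\pm1$ --- reduces the remaining use of the non-definable predicate $\Tor$ to a condition on $\Q$-rational data. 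In this way the ``multiplicatively dependent locus'' of $V$ is exhibited as a definable subset of $V$, and the sentence ``$\U^\ell\cap V$ is contained in it'' is first-order. I expect the genuine obstacle to be exactly this last point: controlling, uniformly, the intersection of $V$ with all torsion cosets of proper algebraic subgroups in spite of $\Tor$ being undefinable --- the step at which \cite{Poi01} used the full CIT, and which in the torsion-free case of \cite{BaHiMaWa09} reduces to the simpler analysis of $V\cap\Tor^\ell$.
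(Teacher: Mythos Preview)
Your reduction of (iv) to the scheme $(\ast)$ is correct and matches the paper's first step, and you have identified the right tools (Facts~\ref{WCIT} and~\ref{ML}). The gap is in the claim that each instance of $(\ast)$ is first-order. The condition ``$y$ is multiplicatively dependent modulo $\Tor$'' is equivalent to $y$ lying in \emph{some} proper algebraic subgroup of $(K^*)^\ell$, an infinite disjunction; and for, say, a sufficiently general $\Q$-curve $V\subseteq(K^*)^3$ (so that $2\dim V<3$), the multiplicatively dependent points of $V$ form a countably infinite discrete set --- so the ``multiplicatively dependent locus'' of $V$ is \emph{not} a definable subset of $V$, contrary to what your final paragraph asserts. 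The Weak CIT and Laurent's theorem do not reduce this locus to a finite union of subgroup conditions for \emph{all} points of $V$, only for suitably generic ones. (Your side remark about the constant value of a monomial map on $V$ being a $\Q$-point is not the mechanism at work either: the relevant images $y^{M_i}$ are not constant on $V$.)

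What the paper does instead is restrict the axiom to points $y\in(W\setminus W^*)\cap\U^n$, where $W^*$ is an explicit proper $\Q$-constructible subset of $W$ (the locus where some fibre $W\cap yT_i$ has excessive dimension). For such $y$, the axiom $\psi_W$ asserts that $y^{M_i}$ lands in one of finitely many explicit proper $\Q$-subgroups $H_{ij}$ of $(K^*)^{k_i}$, and \emph{this} is first-order. The direction ``$\{\psi_W\}\Rightarrow$(iv)'' still goes through, because a witness $b$ to $\delta<0$, taken green and multiplicatively independent, is generic in its own $\Q$-locus $W$ and hence automatically avoids $W^*$. The substantive direction is ``(iv)$\Rightarrow\psi_W$'': given $b\in(W\setminus W^*)\cap\U^n$, (iv) forces a dependence; via the Weak CIT one passes to a $\cd$-maximal subvariety with minimal torus some $T_i$; then $b^{M_i}$ is simultaneously green and in $\Q^{\alg}$, hence in $\GrTor\subseteq\Tor$ (since (iv) forces $\U\cap\Q^{\alg}=\GrTor$), so $b^{M_i}\in W_i\cap\Tor^{k_i}$; and Laurent's theorem, applied to $W_i$, then places $b^{M_i}$ in one of the $H_{ij}$. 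The hypothesis $b\notin W^*$ is precisely what guarantees $W_i\subsetneq(K^*)^{k_i}$, so that Laurent applies non-trivially and the $H_{ij}$ are proper.
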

\begin{proof}
It is clear that conditions (i) and (ii) can be expressed by a set of $\Lan$-sentences. Moreover, it is easy to see that condition (iii) can be expressed by a set of sentences requiring that $\U$ has non-trivial $p$-torsion precisely for those primes $p$ for which $\GrTor$ has non-trivial $p$-torsion.
 
We shall now see that, modulo (i),(ii) and (iii), condition (iv) is equivalent to the validity of the following sentences: for each $n \geq 1$, and each subvariety $W$ of $\Cn{n}$ defined and irreducible over $\Q$ of dimension $< \frac{n}{2}$, the sentence
\[
\psi_W := \forall y \bigg[
(y \in W \land y \in \U^n \land y \not\in W^*)  
\longrightarrow \bigvee_{\substack{1\leq i \leq s\\ W_{i} \neq (K^*)^{k_i}}} \bigvee_{1 \leq j\leq r_i} y^{M_i} \in H_{ij}
\bigg]
\]
where
\begin{itemize}
\item $T_1,\dots,T_s$ are the (proper) subtori of $\Cn{n}$ provided by Fact~\ref{WCIT} for $W$, and for $i=1,\dots,s$, $M_i$ is a $k_i \times n$-matrix over $\Z$ of rank $k_i$ such that $T_i$ is defined by the system of equations $y^{M_i} = 1$;
\item for each $i=1,\ldots s$, $W_i$ is the Zariski closure of the set $W^{M_i} := \{ y^{M_i} \mid y \in W \}$ inside $(K^*)^{k_i}$;
\item for each $i=1,\dots,s$ such that $W_i$ is a proper subvariety of $(K^*)^{k_i}$, for each 
$1\leq j\leq r_i$, $H_{ij}$ is a proper algebraic subgroup of $(K^*)^{k_i}$ defined over $\Q$ such 
that $H_{ij}\supseteq\gamma_{ij}H'_{ij}$, where $\gamma_{i1},\dots,\gamma_{ir_i}$ and $H'_{i1},\dots,H'_{ir_i}$ are as provided by Fact~\ref{ML} for the variety $W_i$;
\item $W^* := \bigcup_{i=1}^s W^{*i}$, where
$$W^{*i} := \{ b \in W\ \mid \dim W \cap bT_i > \dim W - \dim W_i \}.$$
\end{itemize}


Observe that each $W_i$, each $W^{*i}$, $W^*$ and the Zariski closure of $W^*$ are definable over $\Q$ (in the language of rings). 
Let us see that $W^*$ is a non-generic subset of $W$, i.e. its Zariski closure is not the whole of $W$. For each $i$, consider $f_i:W\rightarrow W_i$ given by $b\mapsto b^{M_i}$. Suppose $b$ is generic in $W$ over $\Q$. Then $f_i(b)$ is generic in $W_i$ over $\Q$. 
Also, $\dim W \cap bT_i = \trdeg(b/f_i(b))$. Indeed, $W \cap bT_i$ is defined over $f_i(b)$, as the fibre of $f_i$ above $f_i(b)$ inside $W$, and for any element $b'$ of $W \cap bT_i$ the element $f_i(b') = f_i(b)$ is algebraic over each of $b$ and $b'$, so $\trdeg(b'/f_i(b)) \leq \trdeg(b/f_i(b))$. 
Since $\trdeg(b/f_i(b)) =  \trdeg(b) - \trdeg(f_i(b))$, 
we get $\dim W \cap bT_i = \dim W - \dim W_i$. Thus, $b\not\in W^*$. 

\smallskip

Now assume $(K,\U)$ satisfies (i)-(iii) and all the above sentences $\psi_W$. To see that $(K,\U)$ must then also satisfy (iv), suppose towards a contradiction that $b$ is an $n$-tuple from $K$ such that $\delta(b) < 0$. It is easy to see that we may assume $b$ to be green and multiplicatively independent. Let $W$ be the algebraic locus of $b$ over $\Q$. Then, since $\delta(b) < 0$, we have $\dim W < \frac{n}{2}$. Thus, by our assumption, $\psi_W$ holds. Since $b$ is generic in $W$ over $\Q$, we have $b \in (W \setminus W^*)\cap \U^n$. Thus, we get a multiplicative dependence on $b$ from $\psi_W$, hence a contradiction. This proves that $(K,\U)$ satisfies (iv).


Conversely, assume that $(K,\U)$ satisfies (i)-(iv) and let us see that the above sentences hold in $(K,\U)$. Let $n \geq 1$ and let $W$ be a subvariety of $(K^*)^n$ defined and irreducible over $\Q$ of dimension $<\frac{n}{2}$. Suppose $b$ is in the set $(W \setminus W^*) \cap \U^n$. Since $\trdeg(b) \leq \dim W <n/2$ and by assumption $\delta(b) \geq 0$, the tuple $b$ must be multiplicatively dependent. Thus, we may choose $H$ a proper algebraic subgroup of $\Cn{n}$ containing $b$ 
such that $\dim(H)=\ldim(b)$. 

Let $T$ be the connected component of $H$ (so $T$ is a torus), and let $S$ be an irreducible component of 
$W \cap bT$ containing $b$. Note that $bT$ is defined over $\Q^{\alg}$, and so $S$ is defined over $\Q^{\alg}$ as well. In particular, there is $b'\in S(\Q^{\alg})$.

Note that (iv) implies that $\U\cap\Q^{\alg}=\GrTor$. It follows that $\ldim(b)=\ldim(b/\Q^{\alg})=\dim(T)$, and thus $0 \leq \delta(b) \leq 2\dim(S)-\dim(T)=\dim(S)-\cd(S)$, since $\trdeg(b) \leq \dim(S)$. We thus have $\cd(S)\leq\dim(S)<\frac{n}{2}$.

Let $S\subseteq S'\subseteq W$ be such that $\cd(S')=\cd(S)$ and $S'$ is $\cd$-maximal in $W$. So 
the minimal torus of $S'$ is equal to $T_i$ for some $i$, with $T\subseteq T_i$. Moreover, since $\cd(S')\leq\dim(S')<\frac{n}{2}$, necessarily $i\neq 0$. 
Let $\alpha=b'^{M_i}=b^{M_i}$. The coordinates of $\alpha$ are from $\U \cap\Q^{\alg}$, so $\alpha\in\GrTor^{k_i}$ by what we said above. Thus, $\alpha \in W_i \cap \Tor^{k_i}$, and hence it is in one of the $H_{ij}$.

It remains to show that $W_i$ is a proper subvariety of $\Cn{k_i}$. First, note that 
$\frac{n}{2}>\cd(S)=\cd(S')=\dim(T_i)-\dim(S')\geq\dim(T_i)-\dim(W\cap bT_i)$. Using $b\not\in W^*$, 
we conclude by the following calculation:
\begin{eqnarray*}
\dim(W_i)&=&\dim(W)-\dim(W\cap bT_i)\\
&<&\frac{n}{2}-\dim(W\cap bT_i)\\
&=&\frac{n}{2}-\dim(T_i)+(\dim(T_i)-\dim(W\cap bT_i))\\
&<& \frac{n}{2}-(n-k_i)+\frac{n}{2}=k_i.
\end{eqnarray*}
\end{proof}

\section{Improved codes}\label{S:Bad-Context}\newcounter{Kode_enum}
In this section, we give a precise description of the improved codes proposed in \cite{Hil12}. As was mentioned in the introduction, Roche had found a gap in the original construction which made this improvement necessary.

For $V\subseteq\Cn{n}$ and $m\geq1$, the set $\{(a_1,\ldots,a_n)\in\Cn{n}\mid (a_1^m,\ldots,a_n^m)\in V\}$ will be denoted by $\sqrt[m]{V}$.

\begin{definition}\label{D:Kummer}
Let $V$ be an irreducible subvariety of $\Cn{n}$.
\begin{itemize} 
\item $V$ is called \emph{Kummer generic} 
if $\sqrt[m]{V}$ is irreducible for every $m\geq1$. 
\item $V$ is called \emph{free} 
if it is not contained in a coset of a proper subtorus of $\Cn{n}$.
\end{itemize}
\end{definition}

Part (1) of the following fact is due to Zilber \cite{Zil06}, and part (2) is 
due to the second author \cite{Hil12}. For a more conceptual proof of the corresponding result in 
arbitrary semi-abelian varieties, see \cite{BaGaHi13}.

\begin{fact}\label{F:Kummer}
\begin{enumerate}
\item If $V$ is free, then there is $N\geq 1$ such that every irreducible component of $\sqrt[N]{V}$ 
is Kummer generic.
\item Kummer genericity is a definable property, i.e.\ given a definable family $V(x,z)$ of subvarieties of $\Cn{n}$, the set of parameters $b$ for which $V(x,b)$ is Kummer generic is definable.
\end{enumerate}
\end{fact}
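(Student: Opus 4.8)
I plan to argue via a purely field-theoretic reformulation of Kummer genericity. Fix an irreducible $V\subseteq (K^*)^n$, let $F=K(V)$ and let $t_1,\dots,t_n\in F^*$ be the restrictions of the coordinate functions. Since $K$ is algebraically closed, $F^*/K^*$ is torsion-free, $K^*\subseteq (F^*)^m$ for all $m$, and $V$ is free exactly when the subgroup $\Gamma_V\le F^*/K^*$ generated by the classes of $t_1,\dots,t_n$ is free of rank $n$. As $\mu_m\subseteq K\subseteq F$, Kummer theory gives that the number of irreducible components of $\sqrt[m]{V}$ equals $m^n/[F(\Gamma_V^{1/m}):F]=\lvert\,\Gamma_V\cap m(F^*/K^*):m\Gamma_V\,\rvert$; and since $F^*/K^*$ is torsion-free, $p$-purity of $\Gamma_V$ for all primes $p$ is equivalent to $m$-purity for all $m$. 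Hence $V$ is Kummer generic if and only if $\Gamma_V$ is a pure subgroup of $F^*/K^*$, equivalently $\Gamma_V$ equals its division hull $D_V:=\{g\in F^*/K^*\mid g^k\in\Gamma_V\text{ for some }k\ge 1\}$.

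The next step is the finiteness lemma $[D_V:\Gamma_V]<\infty$. After replacing $V$ by its normalization (which changes neither $F$ nor the $t_i$ as units), any $g$ with $g^k\in\Gamma_V$ has trivial Weil divisor, so $g\in\mathcal{O}(V)^*$; thus $D_V$ embeds into $\mathcal{O}(V)^*/K^*$, which is finitely generated by the unit theorem for finitely generated domains over a field. A finitely generated group has finitely generated subgroups, and $D_V\otimes\Q=\Gamma_V\otimes\Q\cong\Q^n$, so $D_V/\Gamma_V$ is finite. Let $N$ be (a suitable multiple of) its exponent. For an irreducible component $W$ of $\sqrt[N]{V}$ with coordinate functions $s_i$, one has $s_i^N=t_i$, hence $\Gamma_V=N\Gamma_W$ inside $K(W)^*/K^*$, and one checks that any $g\in K(W)^*$ with $g^k\in\Gamma_W$ must already lie in $\Gamma_W$: the relation $g^{kN}\in\Gamma_V$ together with the finiteness of $D_V/\Gamma_V$ forces, after a chase through the $[N]$-tower, the class of $g$ into $\Gamma_W$. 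Thus every component of $\sqrt[N]{V}$ is Kummer generic, which is part (1). I expect the real work — and the main obstacle — to be exactly this last descent: controlling how the finite defect $D_V/\Gamma_V$ sits inside the Galois theory of the Kummer tower of $V$, so that it is genuinely "used up" after extracting $N$-th roots.

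For part (2), I would first invoke the standard fact that, for a uniformly definable family $V(\x,\z)$ of subvarieties of $(K^*)^n$ and a fixed $m\ge 1$, the family $\sqrt[m]{V_\bb}$ is again uniformly definable and the set $B_m:=\{\bb\mid \sqrt[m]{V_\bb}\text{ is irreducible}\}$ is definable — irreducibility (exactly one component of full dimension) being definable in $\ACF_0$ uniformly in parameters. By the torsion-freeness remark above, $V_\bb$ is Kummer generic if and only if $\sqrt[p]{V_\bb}$ is irreducible for every prime $p$, so $\{\bb\mid V_\bb\text{ Kummer generic}\}=\bigcap_p B_p$. To extract a definition one needs a uniform bound: an integer $M$, depending only on the family, with $\bigcap_{p\le M}B_p=\bigcap_p B_p$. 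This is obtained by a uniform version of part (1): a smooth compactification of $V_\bb$ and its boundary divisors, as well as the degrees entering the unit theorem, vary in a bounded fashion over the parameter space, so the rank of $\mathcal{O}(V_\bb)^*/K^*$, hence the order of $D_{V_\bb}/\Gamma_{V_\bb}$, and hence the integer $N$ of part (1), can be chosen independently of $\bb$; then $V_\bb$ is Kummer generic iff $\sqrt[m]{V_\bb}$ is irreducible for all $m\mid N$, a finite conjunction of definable conditions. The case where $V_\bb$ fails to be free is reduced to the free case by passing to the (definably varying) minimal torus. The main obstacle in part (2) is precisely this uniformity of the geometric bound; once it is available, the model-theoretic packaging is routine.
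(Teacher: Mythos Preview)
The paper does not prove this statement: it is recorded as a \emph{Fact}, with part~(1) attributed to Zilber and part~(2) to Hils, and a reference to Bays--Gavrilovich--Hils for a more conceptual treatment in semi-abelian varieties. There is therefore no ``paper's own proof'' to compare your proposal against.

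That said, your outline is broadly in the right spirit. The reformulation of Kummer genericity as purity of $\Gamma_V$ in $F^*/K^*$ via Kummer theory is correct, and invoking the unit theorem to bound the division hull is the standard move. Two comments on genuine soft spots you have already flagged yourself. First, in the descent step for part~(1), the passage from the exponent $N$ of $D_V/\Gamma_V$ in $F^*/K^*$ to purity of $\Gamma_W$ in $K(W)^*/K^*$ is not automatic: the division hull computed in the larger field $K(W)$ could in principle be strictly larger, and one must argue (e.g.\ by comparing units on the normalisation of $W$) that no new defect appears after extracting $N$-th roots. This is exactly the point that needs an honest argument rather than a ``chase''. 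Second, for part~(2), the uniform bound on $N$ over the family is the whole content of the result; appealing to bounded variation of compactification data is plausible but is itself a nontrivial uniformity statement, and the cited proof in fact proceeds differently. Finally, the non-free case needs no reduction to the free case: a non-free irreducible $V$ is never Kummer generic (already $\sqrt[2]{V}$ is reducible), and freeness is definable in families, so one simply intersects with that condition.
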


Before we construct the improved codes, let us recall the notion of minimal prealgebraicity from \cite[Section 3]{BaHiMaWa09}. 

For $A\subseteq K^*$, we denote by $\langle A\rangle$ the divisible hull of the 
(multiplicative) subgroup of $K^*$ generated by $A$. If $b\in K^*$ is a finite tuple, we let 
$\delta(\langle Ab\rangle/\langle A\rangle):=\delta(b/A):=2 \trdeg(b/A) - \ldim(b/A)$.

\begin{definition}
\begin{itemize}
\item Let $A=\langle A\rangle\subseteq \langle Ab\rangle=B$. The extension $B/A$ 
is called \emph{minimal prealgebraic 
(of length $n$)} 
if the following conditions hold:

\begin{itemize}
\item $2\leq \ldim(B/A)=n<\infty$, 
\item $\delta(B/A)=0$, and  
\item $\delta(B/B')<0$ for every $B'=\langle B'\rangle$ with $A\subsetneq B'\subsetneq B$.
\end{itemize}

\item A strong $n$-type $p(x)=\stp(b/A)$ is \emph{minimal prealgebraic} 
if the extension $\langle Ab\rangle/\langle A\rangle$ is minimal prealgebraic of length $n$ (in particular, in this case the tuple $b$ is multiplicatively 
independent over $A$).
\end{itemize}
\end{definition} 

For two formulas $\phi(x)$ and $\psi(x)$ of Morley degree 1 we write $\phi\sim\psi$ if 
$\Mr(\phi\Delta\psi)<\Mr(\phi)$, where $\phi\Delta\psi$ denotes their symmetric difference.

\begin{definition}\label{D:Kodevarietaet}
Let $\phi(x)$ be a formula of Morley degree 1 ($x$ ranging over $\Cn{n}$).
\begin{itemize}
\item $\phi(x)$ is called \emph{minimal prealgebraic} if its generic type is minimal prealgebraic.  
\item $\phi(x)$ is called \emph{Kummer generic} if the unique irreducible variety $V(x)$ such that 
$\phi\sim V$ is Kummer generic. Similarly, a strong type is called \emph{Kummer generic} if it is the generic type of a Kummer generic variety.
\end{itemize}
\end{definition} 

Let $T\subseteq\Cn{n}\times\Cn{n}$ be an $n$-dimensional torus such that $\pi_1(T)=\Cn{n}=\pi_2(T)$. Such a torus will be called a \emph{correspondence torus}.
Let $\phi_1$ and $\phi_2$ be two formulas of Morley degree 1, and let $X_i\subseteq\Cn{n}$ be 
the set defined by $\phi_i$.  
We say that $T$ induces a \emph{toric correspondence} between $\phi_1$ and $\phi_2$ 
if $(X_1\times X_2)\cap T$ projects generically onto both $X_1$ and $X_2$.

The following lemma is easy.
\begin{lemma}\label{L:toric}
\begin{enumerate}
\item The set of Kummer generic formulas is closed under multiplicative translations: if $\phi(x)$ is 
Kummer generic and $m\in\Cn{n}$, then $\phi(x\cdot  m)$ is Kummer generic, too.
\item The set of minimal prealgebraic formulas is closed under toric correspondences and under 
multiplicative translations.
\end{enumerate}
\end{lemma}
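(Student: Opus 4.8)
The plan is to prove the two parts of Lemma~\ref{L:toric} more or less independently, each by unwinding the relevant definition and reducing to an elementary statement about algebraic tori.

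\textbf{Part (1).} Let $\phi(x)$ be Kummer generic and let $V$ be the unique irreducible variety with $\phi \sim V$; by definition $V$ is Kummer generic, i.e.\ $\sqrt[m]{V}$ is irreducible for every $m \geq 1$. The formula $\phi(x \cdot m)$ is also of Morley degree $1$, and the unique irreducible variety associated to it is the translate $m^{-1} \cdot V = \{m^{-1} y \mid y \in V\}$ (translation by a fixed point is an automorphism of $(K^*)^n$ as a variety, so it preserves irreducibility and Morley degree, and commutes with $\sim$). So I must check that $\sqrt[m]{m^{-1}\cdot V}$ is irreducible for all $m$. The key point is the identity
\[
\sqrt[m]{c \cdot V} = c' \cdot \sqrt[m]{V}
\]
for any $m$-th root $c'$ of $c$ in $(K^*)^n$: if $(c')^m = c$ then $y \in \sqrt[m]{c\cdot V}$ iff $y^m \in c \cdot V$ iff $(c'^{-1}y)^m \in V$ iff $c'^{-1}y \in \sqrt[m]{V}$. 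Since $(K^*)^n$ is divisible, such a $c'$ exists, and translation by $c'$ is again an automorphism of the variety, so it takes the irreducible set $\sqrt[m]{V}$ to the irreducible set $\sqrt[m]{c\cdot V}$. Applying this with $c = m^{-1}$ gives irreducibility of $\sqrt[m]{m^{-1}\cdot V}$, hence $\phi(x\cdot m)$ is Kummer generic. (One should note there is a harmless clash of notation between the translation element $m$ and the radical index $m$; in the write-up I would rename one of them.)

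\textbf{Part (2).} Closure under multiplicative translations: if $\phi(x)$ is minimal prealgebraic with generic $b$ over a strong base $A$, then $\phi(x\cdot m)$ has generic $m^{-1}b$; for $m \in (K^*)^n$ defined over $A$ (or working over a base containing $m$), $\trdeg$ and $\ldim$ of tuples and sub-extensions are unchanged by multiplying coordinates by fixed nonzero scalars, since $\langle A, m^{-1}b\rangle = \langle A, m, b\rangle$ has the same transcendence degree and the same linear dimension over $\langle A, m\rangle$ as $\langle A, b\rangle$ does; hence $\delta$-values are preserved, and the three defining conditions of minimal prealgebraicity (length, $\delta = 0$, strict $\delta$-negativity on proper intermediate extensions) transfer directly. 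Closure under toric correspondences: suppose $T$ induces a toric correspondence between $\phi_1$ (minimal prealgebraic, variety $X_1$) and $\phi_2$ (variety $X_2$), so $(X_1 \times X_2) \cap T$ projects generically onto both $X_i$. Take a generic point $(b_1, b_2)$ of an appropriate component of $(X_1\times X_2)\cap T$ over the base $A$: then $b_i$ is generic in $X_i$, and because $T$ is an $n$-dimensional subtorus with both coordinate projections surjective, $b_2 \in b_1^N \cdot (\text{fixed data})$ up to isogeny — concretely, $T$ is cut out by equations expressing the $b_2$-coordinates as Laurent monomials in the $b_1$-coordinates times parameters from $A$, and vice versa, so $b_1$ and $b_2$ are interalgebraic over $A$ and generate the same divisible hull: $\langle A b_1\rangle = \langle A b_2\rangle$. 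Since minimal prealgebraicity of $p_i = \stp(b_i/A)$ is by definition a property only of the extension $\langle A b_i\rangle / \langle A\rangle$, and this extension is literally the same for $i = 1$ and $i = 2$, minimal prealgebraicity of $\phi_1$ immediately yields that of $\phi_2$.

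\textbf{Main obstacle.} Neither part is deep; the one point requiring care is the claim in Part (2) that a correspondence torus $T$ (of dimension $n$, with $\pi_1(T) = \pi_2(T) = (K^*)^n$) forces the two generic coordinate tuples to generate the same divisible subgroup, i.e.\ $\langle A b_1 \rangle = \langle A b_2\rangle$. This is where I expect to spend a sentence or two: I would argue that since $\dim T = n$ and both projections are dominant, each projection $\pi_i|_T : T \to (K^*)^n$ is a surjective homomorphism of algebraic tori with finite kernel, hence an isogeny; dualizing, the two projections correspond to finite-index inclusions of character lattices $\Z^n \hookrightarrow M(T) \hookleftarrow \Z^n$, which means the $b_2$-coordinates are monomials in the $b_1$-coordinates (with exponents the matrix of the composed isogeny, up to the parameters defining the particular coset $T$), and symmetrically. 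From this, interalgebraicity and equality of divisible hulls over $A$ follow, and the rest of Part (2) is bookkeeping. Everything else is a direct translation-invariance check for $\trdeg$, $\ldim$, and hence $\delta$.
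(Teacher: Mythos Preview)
Your argument is correct. The paper itself gives no proof of this lemma, simply introducing it with ``The following lemma is easy''; your write-up supplies precisely the routine verifications one would expect --- the identity $\sqrt[m]{c\cdot V}=c'\cdot\sqrt[m]{V}$ for Part~(1), and for Part~(2) the observation that a correspondence torus makes both projections isogenies, so that $\langle A b_1\rangle=\langle A b_2\rangle$ and the minimal prealgebraic conditions transfer automatically. One cosmetic remark: in your ``main obstacle'' paragraph you refer to ``the parameters defining the particular coset $T$'', but $T$ is a torus, not a coset, so the monomial relations between $b_1$ and $b_2$ are parameter-free; this slip does not affect the argument.
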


\begin{corollary}\label{C:code-enough}
\begin{enumerate}
\item Every minimal prealgebraic formula is in toric correspondence with some Kummer generic (and minimal prealgebraic) formula.
\item For every formula $\phi(x,z)$ the set of $b$ such that $\phi(x,b)$ is minimal prealgebraic 
(Kummer generic, respectively) is definable. 
\end{enumerate}
\end{corollary}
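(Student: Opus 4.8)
The plan is to deduce both parts from the preceding Lemma \ref{L:toric}, Fact \ref{F:Kummer}, and the definability results already established. For part (1), start with a minimal prealgebraic formula $\phi(x)$ and let $V(x)$ be the unique irreducible variety with $\phi\sim V$. The first step is to reduce to the case where $V$ is free: the minimal torus $T_0$ of $V$ is some subtorus of $\Cn{n}$, and after a multiplicative translation (which by Lemma \ref{L:toric}(2) preserves minimal prealgebraicity) we may assume $V$ lies in $T_0$ itself; viewing $V$ as a subvariety of $T_0\cong\Cn{m}$ it is then free in that smaller torus. The subtlety here is that Kummer genericity was defined relative to the ambient $\Cn{n}$, so I would need to check (or arrange) that the reduction to a free subvariety of a coordinate subtorus is compatible with the notion used later; in the intended application $\phi$ is of length $n$ and I expect freeness to come essentially for free from minimal prealgebraicity, but this point must be spelled out carefully. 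Once $V$ is free, apply Fact \ref{F:Kummer}(1) to get $N\geq1$ such that every irreducible component of $\sqrt[N]{V}$ is Kummer generic; pick one such component $W$. The map $x\mapsto x^N$ (coordinatewise) restricts to a dominant finite-to-one morphism $W\to V$, and its graph is contained in an $n$-dimensional correspondence torus $T\subseteq\Cn{n}\times\Cn{n}$ with $\pi_1(T)=\pi_2(T)=\Cn{n}$, namely the torus defined by the equations matching $x_i^N = y_i$ — more precisely, the connected algebraic subgroup cut out by $\{x_i^N y_i^{-1}=1\}$, which surjects onto both factors. Since $(W\times V)\cap T$ projects dominantly (indeed generically, as both maps are finite) onto $W$ and onto $V$, this $T$ induces a toric correspondence between (a Morley-degree-1 formula defining) $W$ and $\phi$. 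By Lemma \ref{L:toric}(2) the formula defining $W$ is again minimal prealgebraic, and by construction it is Kummer generic; this proves (1).

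For part (2), the definability of the set of $b$ such that $\phi(x,b)$ is minimal prealgebraic is a standard consequence of the definability of Morley rank and Morley degree in the strongly minimal/finite-Morley-rank context together with the definability of linear dimension $\ldim$ over the predicate — one expresses the three defining conditions ($\ldim(B/A)=n$, $\delta(B/A)=0$, and $\delta(B/B')<0$ for all intermediate $B'$) uniformly; the last clause is a statement about finitely many "coordinate" intermediate divisible hulls and is again a Morley-rank/linear-algebra condition, hence definable. The definability of the set of $b$ for which $\phi(x,b)$ is Kummer generic is exactly Fact \ref{F:Kummer}(2), applied to the family of irreducible varieties $V(x,b)$ associated to $\phi(x,b)$ (using that $b\mapsto V(x,b)$, picking out the unique irreducible $V$ with $\phi(x,b)\sim V$, is itself a definable assignment since it amounts to taking the Zariski closure of the generic of $\phi(x,b)$).

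The main obstacle I anticipate is the first reduction in part (1): making precise the passage from a minimal prealgebraic formula to a free variety and verifying that the correspondence torus one builds from the $N$-th power map genuinely has full projections onto both copies of $\Cn{n}$ (not merely onto a subtorus). This requires keeping track of how the length-$n$ hypothesis interacts with the minimal torus of $V$, and confirming that no coordinates are "lost" when one takes $N$-th roots — the connectedness of the relevant subgroup and the surjectivity of $x\mapsto x^N$ on $\Cn{n}$ are what save the day, but the bookkeeping deserves care. Everything else is a routine assembly of the cited lemmas and facts.
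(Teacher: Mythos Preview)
Your argument is correct and follows the same route as the paper's proof; the only substantive difference is that you treat the passage to a free variety as a reduction step requiring care, whereas the paper disposes of it in one line: \emph{every minimal prealgebraic variety is free}. This is exactly the point you suspect (``I expect freeness to come essentially for free from minimal prealgebraicity''), and it does. If $V$ were contained in a coset of a proper subtorus, then $b^M$ would be constant on $V$ for some nonzero $M\in\Z^n$; call this constant $c\in\acl(A)$. Either $c\in\langle A\rangle$ (modulo torsion), which forces $\ldim(b/A)<n$ and contradicts the length condition, or else $B':=\langle A,c\rangle$ is a proper intermediate extension with $\trdeg(B/B')=\trdeg(b/A)=n/2$ and $\ldim(B/B')\leq n-1$, giving $\delta(B/B')\geq 1>0$ and contradicting minimality. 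So your ``main obstacle'' never arises: $V$ is already free in $\Cn{n}$, and the $N$-th power graph $\{(x,y):y_i=x_i^N\}$ is immediately a correspondence torus with full projections, exactly as you describe.

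For part~(2), the paper simply invokes \cite[Lemma~4.3]{BaHiMaWa09} for the definability of minimal prealgebraicity and Fact~\ref{F:Kummer}(2) for Kummer genericity; your sketch of how the former works is reasonable, and there is no disagreement in approach.
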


\begin{proof}
(1) Using Lemma \ref{L:toric}, this follows from Fact \ref{F:Kummer}(1), since every minimal prealgebraic 
variety is free.

(2) Minimal prealgebraicity is a definable property by \cite[Lemma 4.3]{BaHiMaWa09}, and 
the definability of Kummer genericity is Fact \ref{F:Kummer}(2).
\end{proof}

\begin{lemma} \label{C:Kg}
Let $V\subseteq \Cn{n}$ be a Kummer generic variety and $T\subseteq\Cn{2n}$ a 
correspondence torus. Then there is a unique (irreducible) variety $V'\subseteq\Cn{n}$ 
such that $T$ induces a toric correspondence between $V$ and $V'$.
\end{lemma}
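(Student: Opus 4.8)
The plan is to define $V'$ as the Zariski closure of the projection to the second factor of $(V \times (K^*)^n) \cap T$, and then verify the three required properties: that this closure is irreducible, that $T$ induces a toric correspondence between $V$ and $V'$, and that $V'$ is unique with this property. For irreducibility, I would first note that $(V \times (K^*)^n) \cap T$ is irreducible: since $\pi_1(T) = (K^*)^n$, the restriction of $\pi_1$ to $T$ makes $T$ a torsor over $(K^*)^n$ under the torus $\ker(\pi_1|_T)$, which is a subtorus $T_0$ of $\{1\}^n \times (K^*)^n \cong (K^*)^n$. Pulling back $V$ along this map, $(V \times (K^*)^n) \cap T$ is a $T_0$-torsor over $V$; concretely, if $d = \dim T_0$, then after a multiplicative change of coordinates on the second factor it looks like $V$ times a translate of a subtorus, fibred over $V$. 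The key point is that this fibration, restricted over a suitable cover, is (up to isogeny) controlled by Kummer genericity of $V$: passing to $\sqrt[m]{V}$ for appropriate $m$ trivialises the torsor, and Kummer genericity guarantees $\sqrt[m]{V}$ stays irreducible, forcing $(V \times (K^*)^n)\cap T$ to be irreducible as well. Its image under the (dominant, quasi-finite-on-a-dense-open) projection $\pi_2$ is therefore irreducible, so $V' := \overline{\pi_2((V\times(K^*)^n)\cap T)}$ is irreducible.

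Next I would check that $T$ induces a toric correspondence between $V$ and $V'$, i.e.\ that $(V \times V') \cap T$ projects generically onto both $V$ and $V'$. Generic projection onto $V'$ is immediate from the definition of $V'$ as the closure of the projection. For generic projection onto $V$: since $\pi_2(T) = (K^*)^n$ as well, the map $\pi_2|_T$ is also surjective with finite-to-one-generic behaviour along the $\pi_1$-direction — more precisely $\dim T = n$ and both projections are dominant, so $(V \times (K^*)^n)\cap T \to V$ is generically finite, hence $\dim V' = \dim V$ and the correspondence is generically finite in both directions. One has to be slightly careful that replacing $(K^*)^n$ by $V'$ in the second coordinate does not lose any fibres over a generic point of $V$; but every point of $(V \times (K^*)^n) \cap T$ has second coordinate in $\pi_2$ of that set, hence in $V'$, so in fact $(V\times(K^*)^n)\cap T = (V \times V')\cap T$, which settles it.

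For uniqueness, suppose $V''$ is another irreducible variety such that $T$ induces a toric correspondence between $V$ and $V''$. Then $(V \times V'') \cap T$ projects generically (hence dominantly) onto $V''$, and it is contained in $(V \times (K^*)^n)\cap T$, whose second projection has closure $V'$; so $V'' \subseteq V'$. Since both are irreducible of the same dimension $\dim V$ (by the generically finite correspondence with $V$ in each case), we get $V'' = V'$.

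The main obstacle I anticipate is the irreducibility argument for $(V\times(K^*)^n)\cap T$: one must argue carefully that Kummer genericity of $V$ — irreducibility of all $\sqrt[m]{V}$ — is exactly what is needed to rule out the torsor $(V\times(K^*)^n)\cap T \to V$ splitting into several components. The point is that a correspondence torus $T$, after diagonalising, is the graph of a surjective homomorphism $(K^*)^n \to (K^*)^n$ with finite kernel composed with the inclusion data, so the fibre product is governed by roots of the coordinate functions on $V$; a component decomposition of the fibre product would produce, via the isogeny, a proper decomposition of some $\sqrt[m]{V}$, contradicting Kummer genericity. Making this reduction to the $\sqrt[m]{\cdot}$ picture precise — choosing the right $m$ from the elementary divisors of the matrix defining $T$, and tracking that $\pi_1(T) = \pi_2(T) = (K^*)^n$ forces that matrix to have full rank with the relevant determinant dividing $m$ — is where the real work lies.
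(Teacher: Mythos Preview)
Your approach is correct in outline and reaches the same conclusion, but it is organised differently from the paper's proof. The paper obtains existence for free: pick $a$ generic in $V$ over $K$, choose any $a'$ with $(a,a')\in T$, and let $V'$ be the locus of $a'$. Kummer genericity is invoked only for uniqueness, and there via a type-theoretic argument: writing $T$ (modulo torsion) as the graph of some $\Gamma=\frac{1}{N}\Delta\in\Gl{n}$, any competitor $a''$ arises as $\alpha''^{\Delta}$ for an $N$-th root $\alpha''$ of $a$, and Kummer genericity forces $\tp(\alpha'/K)=\tp(\alpha''/K)$, hence $V'=V''$. You instead front-load the work by proving that $(V\times\Cn{n})\cap T=\pi_1|_T^{-1}(V)$ is itself irreducible, after which existence and uniqueness both drop out by the easy containment and dimension count you describe. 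Your route is more geometric and makes the role of Kummer genericity structurally transparent (it is exactly the statement that preimages of $V$ under isogenies into $\Cn{n}$ stay irreducible); the paper's route is shorter and stays closer to the model-theoretic idiom used elsewhere. One correction: your description of $T_0=\ker(\pi_1|_T)$ as a subtorus with $d=\dim T_0$ is off, since $\dim T=n=\dim\Cn{n}$ forces $\pi_1|_T$ to be an isogeny and $T_0$ to be finite; you effectively acknowledge this later when you call it ``a surjective homomorphism $\Cn{n}\to\Cn{n}$ with finite kernel'', and your concluding sketch via elementary divisors (factoring $[m]$ through $\pi_1|_T$ so that $\pi_1|_T^{-1}(V)$ is an image of $\sqrt[m]{V}$) is the right way to make the irreducibility step precise.
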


\begin{proof}
Let $a=(a_1,\ldots,a_n)$ be generic in $V$ over $K$. As $\pi_1(T)=\Cn{n}$, there is $a'$ such that $(a,a')\in T$. Let $V'$ be the locus 
of $a'$ over $K$. Then $T$ induces a toric correspondence between $V$ and $V'$. This proves existence.

To prove uniqueness of $V'$, note that, modulo torsion, $T$ corresponds to the graph of some $\Gamma\in\Gl{n}$. 
Let $N\in\N$, $N>0$, be such that $\Gamma=\frac{1}{N}\Delta$ for a matrix $\Delta$ with integer coefficients. We may thus find a tuple 
$\alpha'$ so that $\alpha'^{\Delta}=a'$ and $(\alpha'_i)^{N}=a_i$ for $i=1,\ldots,n$. Moreover, whenever $T$ induces a toric correspondence 
between $V$ and some irreducible varitey $V''$, there is $a''$ such that $(a,a'')\in T$ and $a''$ is generic in $V''$ over $K$. As before, we may find $\alpha''$ such 
that $\alpha''^{\Delta}=a''$ and $(\alpha''_i)^N=a_i$ for $i=1,\ldots,n$. By Kummer genericity of $V$, $\tp(\alpha'/K)=\tp(\alpha''/K)$. In particular, 
$\tp(a'/K)=\tp(a''/K)$, proving that $V'$ is unique.
\end{proof}

If one does not assume $V$ to be Kummer generic, $V'$ is in general not unique. 

\begin{definition}\label{D:setcod}Let $X\subseteq \Cn{n}$ be definable. A formula $\phi(x,z)$ and a torus $T$ \emph{encode} $X=X(x)$, if there is some $b$ such that $T$ induces a toric correspondence between $\phi(x,b)$ and $X(x)$. We say that $\phi$ \emph{encodes} $X$ if the above correspondence is the identity (i.e., $\phi(x,b)\sim X$).
\end{definition} 

\begin{definition}\label{D:Kode}A \emph{code} $\alpha$ is a 
$\emptyset$-definable formula $\phi_{\alpha}(x,z)$ and integers 
$\na$, $\ka$ satisfying the following: 
\begin{enumerate}[(a)] 
\item The length of $x$ is $n_\alpha=2k_\alpha$.
\item $\phi_{\alpha}(x,b)$ is a subset of $\Cn{n_\alpha}$.
\item\label{C:Kode_grad} $\phi_{\alpha}(x,b)$ is either empty or has Morley rank $k_\alpha$ and Morley degree $1$. 
\item\label{C:prealg}If $\phi_\alpha(x,b)\neq\emptyset$, then 
$\phi_\alpha(x,b)$ is minimal prealgebraic and Kummer generic, with irreducible Zariski closure $V_\alpha(x,b)$. 
\item\label{C:sm} Suppose $\phi_{\alpha}(x,b)\neq\emptyset$. Then
$\delta(a/B)\leq0$ for every $b\in B$ and
$a\models\phi_{\alpha}(x,b)$. Moreover, $\delta(a/B)=0$ if and only if 
$a\in\langle B\rangle$ or $a$ is $B$-generic in $\phi_{\alpha}(x,b)$.
\item\label{C:trans} $\phi_{\alpha}(x,z)$ encodes every multiplicative translate of $\phi_{\alpha}(x,b)$. 
\item\label{C:KanPar}If $\emptyset\neq\phi_{\alpha}(x,b)\sim\phi_{\alpha}(x,b')$, then 
$b=b'$.\setcounter{Kode_enum}{\value{enumi}}\end{enumerate}\end{definition}

We set $\theta_\alpha(z):=\exists x\phi_\alpha(x,z)$.

If follows from (\ref{C:KanPar}) that $b$ is the canonical base of the minimal prealgebraic type determined by $\phi_{\alpha}(x,b)$. Note that the only place where this definition differs from the 
one given in \cite[4.7]{BaHiMaWa09} is (\ref{C:prealg}), where we added Kummer genericity as a 
condition.

\begin{lemma}\label{L:prealgenKoded}Let $X$ be a minimal prealgebraic and Kummer generic 
definable set. Then $X$ can be encoded by some code $\alpha$.
\end{lemma}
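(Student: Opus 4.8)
The plan is to build the code $\alpha$ by starting from $X$ itself (or rather from a representative variety of the $\sim$-class of $X$), extracting a $\emptyset$-definable family through which all multiplicative translates of $X$ appear, and then cutting that family down so that the canonical-parameter condition (\ref{C:KanPar}) holds. Concretely, let $V$ be the irreducible Zariski closure of $X$, so $V$ is minimal prealgebraic and Kummer generic; write $b_0$ for a tuple of parameters over which $X$ (equivalently $V$) is defined, and set $n_\alpha := n$, $k_\alpha := n/2$ (note $n$ is even since $\delta(V)=0$ and $\ldim(V)=n$ force $\trdeg(V)=n/2$). The starting formula is a formula $\psi(x,w)$ defining the family of all multiplicative translates $V\cdot m$ of $V$, parametrised by $w=(b_0,m)$; this is $\emptyset$-definable because $V$ is definable over $b_0$ and translation is $\emptyset$-definable in the group $\Cn{n}$.

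First I would pass from $\psi$ to a formula of Morley degree $1$ picking out, for each parameter, the unique generic component: since each $V\cdot m$ is irreducible of Morley rank $k_\alpha$, one replaces $\psi(x,w)$ by the formula expressing ``$x$ lies in $\psi(x,w)$ and in the (uniformly definable) locus of Morley rank $k_\alpha$'', using definability of Morley rank in $\ACF$; call the result $\phi^0(x,w)$. By construction $\phi^0(x,w)$ has Morley degree $1$ (or is empty) for every $w$, its nonempty instances are exactly the translates $V\cdot m$ up to $\sim$, and their Zariski closures are irreducible. Conditions (a), (b), (c) are then immediate, (\ref{C:prealg}) holds by Lemma~\ref{L:toric}(1),(2) (Kummer genericity and minimal prealgebraicity are preserved under multiplicative translation), and (\ref{C:trans}) holds because every translate of a translate of $V$ is again a translate of $V$, hence appears among the instances of $\phi^0$. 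Condition (\ref{C:sm}) is the submodularity/genericity statement: it is exactly the content recorded for minimal prealgebraic types in \cite[Section 3]{BaHiMaWa09} and holds for any minimal prealgebraic formula, so it transfers verbatim.

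The real work is condition (\ref{C:KanPar}): the parameter must be the canonical base of the minimal prealgebraic type it determines, i.e. distinct parameters giving $\sim$-equivalent (nonempty) instances must be equal. To arrange this I would quotient the parameter space by the definable equivalence relation $w\sim_{\mathrm{cb}} w' :\Leftrightarrow \phi^0(x,w)\sim\phi^0(x,w')$ (definable by definability of Morley rank), replace the parameter $w$ by its canonical parameter (a code for its $\sim_{\mathrm{cb}}$-class, available since $\ACF$ eliminates imaginaries), and let $\phi_\alpha(x,z)$ be the induced $\emptyset$-definable formula with $z$ ranging over these canonical parameters. Passing to the quotient does not disturb (a)--(\ref{C:trans}), and now (\ref{C:KanPar}) holds by construction. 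The main obstacle is bookkeeping rather than mathematics: one must check that after this quotient step the formula is still $\emptyset$-definable (it is, since $\sim_{\mathrm{cb}}$ is $\emptyset$-definable and we use elimination of imaginaries over $\emptyset$) and that each fibre over a canonical parameter still has Morley degree $1$ with irreducible minimal prealgebraic Kummer generic Zariski closure — which is automatic because the fibre is (up to $\sim$) still one of the translates $V\cdot m$. Hence $\alpha=(\phi_\alpha,n_\alpha,k_\alpha)$ is a code encoding $X$, for the witnessing parameter $b$ being the canonical parameter of the class of $b_0$ (the translate by $m=1$). $\qed$
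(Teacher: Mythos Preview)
Your overall plan matches the paper's approach, which simply refers to \cite[Lemma 4.8]{BaHiMaWa09} and notes that the only new ingredient is the definability of Kummer genericity (Corollary~\ref{C:code-enough}(2)). However, there are two genuine gaps in your execution.

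First, your treatment of condition (\ref{C:sm}) is incorrect. You claim it ``holds for any minimal prealgebraic formula, so it transfers verbatim'', but this is false: a minimal prealgebraic formula is one whose \emph{generic} type is minimal prealgebraic, whereas condition (\ref{C:sm}) constrains \emph{every} realisation $a$ of $\phi_\alpha(x,b)$. A non-generic point $a\in V$ may well satisfy $\delta(a/B)>0$, or $\delta(a/B)=0$ without lying in $\langle B\rangle$ --- for instance a point on an atypical intersection of $V$ with a coset of a low-dimensional subtorus. To obtain (\ref{C:sm}) one must remove from the variety a proper definable closed subset; showing this bad locus is definable and non-generic uses the Weak CIT, and is precisely the substance of the construction in \cite[Lemma 4.8]{BaHiMaWa09} that you are glossing over.

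Second, your family is parametrised by $w=(b_0,m)$, but as a $\emptyset$-definable formula $\psi(x,z_0,u)$ it has instances for \emph{all} values of $z_0$, not just $z_0=b_0$. For those other instances Lemma~\ref{L:toric} tells you nothing, and you must invoke the definability of minimal prealgebraicity and of Kummer genericity (Corollary~\ref{C:code-enough}(2)) to restrict the parameter set to those $(z_0,u)$ for which (\ref{C:prealg}) holds. This is exactly the one adaptation the paper singles out as needed beyond \cite{BaHiMaWa09}, and it does not appear in your argument.
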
 

\begin{proof}Using definability of Kummer genericity (Corollary \ref{C:code-enough}(2)), it is easy to see that the proof of 
\cite[Lemma 4.8]{BaHiMaWa09} adapts to our context.
\end{proof}

Combining Corollary \ref{C:code-enough}(1) with Lemma \ref{L:prealgenKoded}, one sees 
that the proof of \cite[Satz 4.10]{BaHiMaWa09} goes through, yielding the following result.

\begin{prop}\label{P:Good-Codes}There exists a collection $\Clc$ of codes with the following 
properties:
\begin{enumerate}
\item Every minimal prealgebraic definable set $X$ can be encoded by some $\alpha\in\Clc$ and 
some correspondence torus $T$.
\item The code $\alpha\in\Clc$ from (1) is uniquely determined by $X$, and there are only finitely many 
correspondence tori $T$ such that $X$ is encoded by $\alpha$ and $T$.
\end{enumerate}
\end{prop}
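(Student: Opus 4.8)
The plan is to deduce Proposition~\ref{P:Good-Codes} from the two preceding results exactly as \cite[Satz 4.10]{BaHiMaWa09} is deduced in its original setting, the only new ingredient being that we work with the modified notion of code from Definition~\ref{D:Kode} (which adds Kummer genericity) and replace two lemmas from \cite{BaHiMaWa09} by their improved counterparts, namely Corollary~\ref{C:code-enough}(1) and Lemma~\ref{L:prealgenKoded}. First I would recall how codes in a fixed finite collection are produced: one wants that whenever a definable set $X$ is minimal prealgebraic there is a code $\alpha$ in the collection and a correspondence torus $T$ encoding $X$, and that $\alpha$ is unique and only finitely many such $T$ occur. The key reduction is Corollary~\ref{C:code-enough}(1): every minimal prealgebraic $X$ is in toric correspondence (via some correspondence torus $T$) with a minimal prealgebraic \emph{and} Kummer generic set $X'$; by Lemma~\ref{L:prealgenKoded} such an $X'$ is encoded by some code $\alpha$ in the sense of Definition~\ref{D:Kode}. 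Composing the two toric correspondences shows $X$ is encoded by $\alpha$ and an appropriate correspondence torus.

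Next I would address the finiteness and uniqueness statements. As in \cite{BaHiMaWa09}, since minimal prealgebraicity (and now also Kummer genericity) is a definable property (Corollary~\ref{C:code-enough}(2)), one can, for each fixed dimension $n$, run over a definable family of all minimal prealgebraic Kummer generic sets and extract, by compactness and the canonical-parameter property (\ref{C:KanPar}) of codes, a \emph{single} formula $\phi_\alpha(x,z)$ serving as code for all of them; doing this for each $n$ and collecting the resulting codes yields the countable (in fact finite per length, and one only needs finitely many lengths in any given application, but the statement is about the whole collection) family $\Clc$. Uniqueness of $\alpha$ given $X$ follows because two codes encoding toric-correspondent sets are forced to coincide by the rigidity built into (\ref{C:KanPar}) together with Lemma~\ref{C:Kg}, which guarantees that, for Kummer generic $V$, the toric correspondence through a given correspondence torus determines the partner variety uniquely. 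Finiteness of the set of correspondence tori $T$ such that $X$ is encoded by $\alpha$ and $T$: the correspondence tori are (modulo torsion) graphs of elements of $\mathrm{GL}_n(\Q)$, and the condition that $T$ induces a toric correspondence between the fixed $\phi_\alpha(x,b)$ and the fixed $X$ pins down $T$ up to the finitely many choices coming from torsion and from the finitely many irreducible components involved — this is exactly the argument of \cite{BaHiMaWa09}, unchanged.

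The main obstacle — or rather the only place where genuine care is needed rather than transcription — is making sure that the extra clause (\ref{C:prealg}) (Kummer genericity of $V_\alpha(x,b)$) and clause (\ref{C:sm}) survive the extraction of a uniform code across a definable family, and that they are preserved under the multiplicative translations demanded by (\ref{C:trans}). The first point is handled by Fact~\ref{F:Kummer}(2) (definability of Kummer genericity) so that the family of Kummer generic fibres is itself definable; the second by Lemma~\ref{L:toric}(1), which says Kummer genericity is stable under multiplicative translation, together with Lemma~\ref{L:toric}(2) for minimal prealgebraicity. With these in hand, every step in the proof of \cite[Satz 4.10]{BaHiMaWa09} applies verbatim. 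Hence I would simply record: \emph{Combining Corollary~\ref{C:code-enough}(1) with Lemma~\ref{L:prealgenKoded}, and using Lemmas~\ref{L:toric} and~\ref{C:Kg} for uniqueness together with the definability statements of Corollary~\ref{C:code-enough}(2) and Fact~\ref{F:Kummer}(2), the proof of \cite[Satz 4.10]{BaHiMaWa09} carries over without further change and yields the desired collection $\Clc$.}
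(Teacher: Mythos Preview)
Your proposal is correct and follows essentially the same approach as the paper: the paper's proof is the single sentence ``Combining Corollary~\ref{C:code-enough}(1) with Lemma~\ref{L:prealgenKoded}, one sees that the proof of \cite[Satz 4.10]{BaHiMaWa09} goes through,'' and your final summary is exactly this, with the intermediate paragraphs simply unpacking what that transcription involves. Your additional remarks about definability (Fact~\ref{F:Kummer}(2), Corollary~\ref{C:code-enough}(2)) and stability under translation (Lemma~\ref{L:toric}) are the right checkpoints to verify when carrying the argument over, even if the paper leaves them implicit.
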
 

For the rest of the paper, we fix a set of codes $\Clc$ satisfying the conclusion of Proposition 
\ref{P:Good-Codes}, and we call the elements of $\Clc$ \emph{good codes}.

\begin{fact}[{\cite[Lemma 4.9]{BaHiMaWa09}}] \label{F:Galpha}
Let $\alpha\in\Clc$ and let $G(\alpha,\alpha)$ be the set of correspondence tori $T$ that induce 
a toric correspondence between some non-empty instances $\phi_\alpha(x,b)$ and 
$\phi_\alpha(x,b')$ of $\alpha$. Then $G(\alpha,\alpha)$ is finite.
\end{fact}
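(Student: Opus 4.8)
Since the statement above is \cite[Lemma 4.9]{BaHiMaWa09}, whose formulation and proof concern only subvarieties of tori and never mention the predicate $\U$, the honest option is simply to quote it. For completeness, here is the argument I would give; it uses only Fact~\ref{WCIT}, the freeness of minimal prealgebraic varieties (see the proof of Corollary~\ref{C:code-enough}), and minimal prealgebraicity itself. Write $n=n_\alpha=2k_\alpha$. The plan is to apply Fact~\ref{WCIT} to the definable family of products $V_\alpha(x,z)\times V_\alpha(x',z')$ inside $\Cn{2n}$, obtaining a finite collection $\mathcal{T}$ of tori containing the minimal torus of every $\cd$-maximal subvariety of every $V_\alpha(x,b)\times V_\alpha(x',b')$, and then to prove $G(\alpha,\alpha)\subseteq\mathcal{T}$.

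So I would fix $T\in G(\alpha,\alpha)$ inducing a toric correspondence between $\phi_\alpha(x,b)$ and $\phi_\alpha(x,b')$, set $V=V_\alpha(x,b)$ and $V'=V_\alpha(x,b')$, and first extract an irreducible component $S$ of $(V\times V')\cap T$ projecting dominantly (hence, as $T$ is a correspondence torus, with finite generic fibres) onto both $V$ and $V'$; such an $S$ has $\dim S=k_\alpha$, and since $V$ is free, the minimal torus of $S$ must surject onto the first factor $\Cn{n}$, hence have dimension $\geq n$; being connected and contained in the $n$-dimensional $T$, it equals $T$. Thus $\ldim(S)=n$ and $\cd(S)=k_\alpha$. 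Next I would enlarge $S$ to a $\cd$-maximal $S^+\subseteq V\times V'$ with $\cd(S^+)\leq\cd(S)$ (repeatedly pass to a properly larger subvariety of no greater $\cd$; this terminates for dimension reasons), and let $T^+\in\mathcal{T}$ be its minimal torus, so $T\subseteq T^+$ and $T^+$ surjects onto both factors because $S^+$ still projects dominantly onto the free varieties $V$ and $V'$.

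The crux is to show $\dim T^+=n$, so that $T=T^+\in\mathcal{T}$. Suppose $\dim T^+>n$. Then $\tau:=\pi_2(\ker(\pi_1|_{T^+}))$ is a subtorus of $\Cn{n}$ of dimension $\dim T^+-n$, which is positive and, since $\dim T^+=\dim S^++\cd(S^+)\leq 3k_\alpha$, at most $k_\alpha<n$, so $\tau$ is proper and non-trivial. A fibre-dimension count — the generic fibre of $\pi_1|_{S^+}$ has dimension $\dim S^+-k_\alpha$, and its image under $\pi_2$, as $S^+$ lies in a coset of $T^+$, is contained in a coset of $\tau$ — gives $\dim S^+-k_\alpha\leq\dim\tau=\dim T^+-n$, which together with $\cd(S^+)\leq k_\alpha$ forces $\cd(S^+)=k_\alpha$ and $\dim S^+-k_\alpha=\dim\tau$; hence those fibre images are dense in full $\tau$-cosets contained in $V'$, and letting the fibre vary over a dense subset of $V$ yields that $V'$ is $\tau$-invariant. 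But then, for $c'$ generic in $V'$, the subextension $B'$ of $\langle c'\rangle$ generated by the monomials $c'^m$ with $m$ in the character annihilator of $\tau$ is a proper intermediate extension with $\trdeg(c'/B')=\ldim(c'/B')=\dim\tau$, so $\delta(\langle c'\rangle/B')=\dim\tau>0$, contradicting the minimal prealgebraicity of $\phi_\alpha(x,b')$. Therefore $\dim T^+=n$, $T=T^+\in\mathcal{T}$, and $G(\alpha,\alpha)\subseteq\mathcal{T}$ is finite.

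I expect the main obstacle to be not this dichotomy but the surrounding bookkeeping: producing a single component $S$ that dominates both factors, justifying that $S^+$ may be taken with $\cd(S^+)\leq\cd(S)$, and the genericity/additivity manipulations of $\trdeg$ and $\ldim$ in the last step. An alternative to the Weak CIT route is a degree argument — $\{V_\alpha(x,b)\}$ has uniformly bounded degree, while passing from $V_\alpha(x,b)$ to $V_\alpha(x,b')$ along the rational matrix $\Gamma_T\in\Gl{n}$ attached to $T$ (cf.\ the proof of Lemma~\ref{C:Kg}) changes the degree by an amount that is unbounded unless $\Gamma_T$ ranges over a finite set — but that version requires estimating degrees of root varieties, which the Weak CIT argument avoids.
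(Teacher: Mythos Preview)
The paper does not prove this statement: it is labelled a \emph{Fact} and simply cited from \cite[Lemma 4.9]{BaHiMaWa09}. You correctly identify this at the outset, so your proposal already matches the paper's treatment.

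The argument you then volunteer is sound and is essentially the one given in the cited reference: apply the Weak CIT to the family $V_\alpha(x,z)\times V_\alpha(x',z')$, show that any $T\in G(\alpha,\alpha)$ is the minimal torus of some $\cd$-maximal subvariety of an instance, and rule out $\dim T^+>n$ using minimal prealgebraicity. Your verification of the crux is correct: the equalities $\cd(S^+)=k_\alpha$ and $\dim S^+-k_\alpha=\dim\tau$ are forced, the fibre argument gives $\tau$-invariance of $V'$, and then $B'=\langle b',c'^m:m\in\mathrm{Ann}(\tau)\rangle$ is a proper intermediate extension with $\trdeg(c'/B')=\ldim(c'/B')=\dim\tau$ (here $\ldim(B'/b')=n-\dim\tau$ because the $c'_j$ are linearly independent over $b'$ and the $m_i$ form a basis of a rank $n-\dim\tau$ lattice), whence $\delta=\dim\tau>0$, contradicting minimal prealgebraicity. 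Two small points you already flagged deserve care: $\ker(\pi_1|_{T^+})$ need not be connected, so $\tau$ should be taken to be its identity component; and the existence of a single component $S$ dominating both factors follows immediately from the finiteness of $\pi_i|_T$, since any component dominating $V$ then has dimension $k_\alpha$ and hence also dominates the irreducible $V'$.
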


\subsection{Difference sequences}\label{Sub:Diff}
We now recall an important technical device from \cite{BaHiMaWa09} which is used in the collapsing process. 

\smallskip

For a good code $\alpha\in\Clc$, we choose $\ma<\omega$ such that for any $b\models\theta_\alpha$, 
$b$ is definable over any Morley sequence in $\phi_\alpha(x,b)$ of length $\ma$. (The existence 
of $\ma$ follows from code property (\ref{C:KanPar}).)

\begin{fact}[\mbox{\cite[Satz 5.5]{BaHiMaWa09}}]
For any code $\alpha\in\Clc$ and any $\lambda\geq\ma$ there is a formula 
$\psi_{\alpha}(x_0,\ldots,x_\lambda)$ (whose realisations are called \emph{difference sequences}) satisfying the following properties:

\smallskip

\begin{enumerate}[\upshape(a)]\setcounter{enumi}{\value{Kode_enum}} 
\item\label{D:versch} If $\models\psi_{\alpha}(\e_0,\ldots,e_\lambda)$, then $e_i\neq e_j$ for $i\neq j$. 
\item\label{D:MF} For $b\models\theta_\alpha$ and any Morley sequence 
$(e_0,\dotsc,e_{\lambda},f)$ in $\phi_{\alpha}(x,b)$, we have $$\models\psi_{\alpha}(e_0\inv{f},\dotsc,e_{\lambda} \inv{f}).$$

 \item\label{D:Kode_dcl} If $\models\psi_\alpha(e_0,\dotsc,e_{\lambda})$, there exists 
 a unique $b$ with $\models\phi_{\alpha}(e_i,b)$ for $i=0,\ldots,\lambda$, called the \emph{canonical 
 parameter} of the sequence. Moreover, $b$ lies in the definable closure of any subsequence of $(e_0,\ldots,e_\lambda)$ of length $\ma$.  
 \item\label{D:kleiner} If $\models\psi_{\alpha}(e_0,\dotsc,e_{\lambda})$, then $\models\psi_\alpha(e_0,\ldots,e_{\lambda'})$ for each  $\ma\leq\lambda'<\lambda$.  
\item\label{D:Kode_fork} Let $i\neq j$ and let $b$ be the canonical parameter of the sequence 
$(e_0,\dotsc,e_{\lambda})\models\psi_\alpha$. If there is some $T$ in $G(\alpha,\alpha)$ 
and $e_j'$ with $(e_j,e_j')\in T$ and if $e_i$ is generic in $\phi_\alpha(x,b)$ then 
$e_i\nind_{b}\
e_j'\inv{e_i}$.
\item\label{D:diff} If $\models\psi_{\alpha}(e_0,\dotsc,e_{\lambda})$, then 
$\models \psi_{\alpha}(\partial_i(e_0,\ldots,\e_{\lambda}))$ for 
$i\in \{0,\dots,\lambda\}$, where $$\partial_i(e_0,\ldots,e_\lambda):=(e_0\inv{e_i},\dotsc,e_{i-1}\inv{e_i},e_i^{-1},e_{i+1}\inv{e_i},
\dotsc,e_{\lambda}\inv{e_i}).$$
\end{enumerate}\end{fact}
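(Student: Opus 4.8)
The plan is to let $\psi_\alpha$ describe, up to a routine definability point addressed below, the set $\mathcal{D}_\alpha^\lambda$ of tuples $(e_0,\dots,e_\lambda)$ which are common multiplicative translates of Morley sequences of length $\lambda+1$ in non-empty instances of $\phi_\alpha$. Since by code property~(\ref{C:trans}) every multiplicative translate of an instance $\phi_\alpha(x,b)$ is $\sim$-equivalent to some instance $\phi_\alpha(x,b')$, a tuple in $\mathcal{D}_\alpha^\lambda$ is the same thing as a Morley sequence of length $\lambda+1$ in a non-empty instance of $\phi_\alpha$: the translated sequence consists of generic points of the translated instance, which avoid the rank-$<\ka$ symmetric difference with $\phi_\alpha(x,b')$ and hence form a Morley sequence there. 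With this description clause~(\ref{D:MF}) is immediate. To turn $\mathcal{D}_\alpha^\lambda$ into an honest $\emptyset$-definable formula $\psi_\alpha$ one existentially quantifies over the parameter of the instance --- which for $\lambda\geq\ma$ is uniquely determined, being the canonical parameter supplied by~(\ref{C:KanPar}) --- and uses that a non-empty instance $\phi_\alpha(x,b')$ has constant Morley rank $\ka$ and Morley degree $1$ by~(\ref{C:Kode_grad}); thus, possibly after a compactness argument invoking definability of the (finite) Morley ranks at play, the assertion ``$(e_0,\dots,e_\lambda)$ is a Morley sequence in $\phi_\alpha(x,b')$'' becomes first-order in $(e_0,\dots,e_\lambda,b')$. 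Clause~(\ref{D:kleiner}) is then arranged by taking, for a given $\lambda$, the formula $\psi_\alpha$ to imply the one obtained for each $\lambda'$ with $\ma\leq\lambda'<\lambda$; this also explains the uniformity of the statement in $\lambda$.

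The remaining ``geometric'' clauses are verified directly on $\mathcal{D}_\alpha^\lambda$. Clause~(\ref{D:versch}) holds because distinct members of a Morley sequence in a formula of Morley rank $\ka\geq1$ are distinct. For clause~(\ref{D:Kode_dcl}): that the canonical parameter $b$ lies in the definable closure of any length-$\ma$ subsequence is exactly the defining property of $\ma$, applied to the (possibly translated) instance --- legitimately, since that parameter again realises $\theta_\alpha$; uniqueness of $b$ among all parameters $c$ with $\phi_\alpha(e_i,c)$ for all $i$ rests on code property~(\ref{C:KanPar}) together with the choice of $\ma$, and cleanly securing it may force one to shrink $\mathcal{D}_\alpha^\lambda$ slightly. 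Clause~(\ref{D:diff}) is a short computation: writing $e_i=g_i f^{-1}$ with $(g_0,\dots,g_\lambda,f)$ a Morley sequence in $\phi_\alpha(x,b_0)$, the tuple $\partial_i(e_0,\dots,e_\lambda)$ equals the translate by $g_i^{-1}$ of $(g_0,\dots,g_{i-1},f,g_{i+1},\dots,g_\lambda)$, and this last tuple is again a Morley sequence in $\phi_\alpha(x,b_0)$, a Morley sequence in a Morley degree $1$ formula remaining one after a permutation and the deletion of a term; hence $\partial_i(e_0,\dots,e_\lambda)\in\mathcal{D}_\alpha^\lambda$.

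I expect the genuine obstacle to be the forking clause~(\ref{D:Kode_fork}); indeed this is presumably what pins down the precise shape of $\psi_\alpha$, so that the naive $\mathcal{D}_\alpha^\lambda$ may have to be trimmed. Here one must show that if $\bar e\in\mathcal{D}_\alpha^\lambda$ has canonical parameter $b$, with $e_i$ generic in $\phi_\alpha(x,b)$, and $(e_j,e_j')\in T$ for some correspondence torus $T\in G(\alpha,\alpha)$, then $e_i\nind_b e_j'\inv{e_i}$. The ingredients are: being a correspondence torus, the projections of $T$ to the two factors are finite-to-one, so $e_j$ and $e_j'$ are interalgebraic over $b$; $G(\alpha,\alpha)$ is finite by Fact~\ref{F:Galpha}; and, crucially, $\phi_\alpha$ is minimal prealgebraic, so by code properties~(\ref{C:prealg}) and~(\ref{C:sm}) the data of a difference sequence is as generic as possible subject only to the minimal prealgebraic balance ($\delta=0$ overall, $\delta<0$ on every proper subconfiguration). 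The argument should be a rank computation (in terms of Morley rank, or equivalently of $\delta$) showing that this rigidity leaves no room for $e_i$ to be $b$-independent from $e_j'\inv{e_i}$: that element is built from $e_i$ and, algebraically over $b$, from $e_j$, and the toric correspondence carried by $T$ together with the minimal prealgebraic balance force a forking configuration over $b$. Once clauses~(\ref{D:versch})--(\ref{D:diff}) hold for $\mathcal{D}_\alpha^\lambda$, the compactness/definability step of the first paragraph delivers the desired formula $\psi_\alpha$, retaining all of them.
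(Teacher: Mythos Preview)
The paper does not prove this statement: it is recorded as a \emph{Fact} and cited directly from \cite[Satz 5.5]{BaHiMaWa09}, with no argument supplied. There is therefore no proof in the present paper against which to compare your proposal.

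As a standalone sketch, your approach captures the right idea---difference sequences are essentially (translates of) Morley sequences in instances of $\phi_\alpha$---and most clauses do follow from this description together with the code properties. Two points deserve care. First, your verification of clause~(\ref{D:diff}) invokes a representation $e_i=g_if^{-1}$ with $(g_0,\dots,g_\lambda,f)$ a Morley sequence of length $\lambda+2$, whereas your set $\mathcal{D}_\alpha^\lambda$ was defined via translates of Morley sequences of length $\lambda+1$; you should either argue (by saturation) that such an $f$ always exists, or take $\mathcal{D}_\alpha^\lambda$ from the outset to consist of tuples $(g_0f^{-1},\dots,g_\lambda f^{-1})$ coming from Morley sequences of length $\lambda+2$, as clause~(\ref{D:MF}) itself suggests. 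Second, you rightly flag clause~(\ref{D:Kode_fork}) as the substantive obstacle and offer only a heuristic; in \cite{BaHiMaWa09} this is where the real work lies, and the naive $\mathcal{D}_\alpha^\lambda$ is indeed trimmed to a definable subset on which the forking condition holds uniformly. Since the present paper simply imports the result, none of this is at issue here.
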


\section{Main results and elements of the proof}\label{S:Proof}
In this last section, we will return to the coloured context of Section \ref{S:Axiom} and 
state our main results, both in the uncollapsed case (Theorem \ref{T:Uncollapsed}) and in the collapsed case (Theorem \ref{T:BadField}), thus constructing green fields of Poizat and bad fields with green torsion equal to 
$\GrTor$. Once the axiomatisability of the class $\Cl$ is established, the proofs 
of the corresponding results in the case were $\GrTor$ is trivial go through without major 
changes. In particular, the presence of green torsion does not affect the arguments in the collapsing process.

While we are at it, we will indicate the places where the use of the improved codes in the collapse 
is crucial. As an illustration, we will present a complete 
proof of the axiomatisability of existential closedness (Proposition \ref{P:8.4}). 

\subsection{Green colour}      \label{Sub:Color}
It is convenient to slightly 
modify the definition of the class $\Cl$ from Section \ref{S:Axiom}, allowing not only structures of the form $(K,+,\cdot,0,1,\U)$ satisfying 
conditions (i)-(iv) from the beginning of Section \ref{S:Axiom} but also $\langle\cdot\rangle$-closed 
subsets of such structures. We will do this working in a language $\Lan^*=\Lan_{\mathrm{Morley}}\cup\{\U\}$, where $\Lan_{\mathrm{Morley}}$ is a relational language in which $\ACF_0$ may be axiomatised 
and has quantifier elimination. (See \cite[Section 6]{BaHiMaWa09}.) Clearly, Theorem 
\ref{T:Torsion-Axioms} holds in this modified setting, i.e.,\ $\Cl$ is axiomatisable in $\Lan^*$. 

Let $B\subseteq A$ be structures from $\Cl$. If $\ldim(A)$ is finite, let $\delta(A)=2\trdeg(A)-\ldim(\U(A))$. 
If $\ldim(A/B)$ is finite, or more generally if both $\trdeg(A/B)$ and $\ldim(\U(A)/\U(B))$ are finite, we 
set $\delta(A/B))=2\trdeg(A/B)-\ldim(\U(A)/\U(B))$, the \emph{predimension} 
of $A$ over $B$. Note that if 
$A$ has a green linear basis over $B$, this definition coincides with the one used in the previous section. 

We say that 
$B$ is \emph{self-sufficient} in $A$ if $\delta(A'/B)\geq0$ for all $B\subseteq A'=\langle A'\rangle\subseteq A$ with $\ldim(A'/B)<\infty$. As usual we denote this by $B\leq A$. For the basic properties of $\delta$ and $\leq$, we refer to \cite{BaHiMaWa09}.

\begin{convention}\label{Conv}
\begin{itemize}
\item In the following, we use terms like \emph{algebraic}, \emph{generic}, \emph{Morley sequence}, $\tp(\cdot)$, $\stp(\cdot)$ etc. with respect to the theory $\ACF_0$. In particular, $a\in\acl(A)$ means that $a$ is algebraic over $A$ in 
the field sense.

\item An extension $B\leq A$ in $\Cl$ will be called \emph{minimal prealgebraic} if there is a green 
tuple $a\in A$ such that $\tp(a/B)$ is minimal prealgebraic and $\langle Ba\rangle=A$.

\item We will not distinguish 
between $\langle A\rangle\subseteq K^*$ and $\langle A\rangle\cup\{0\}\subseteq K$. 
This abuse of notation is entirely harmless. 
\end{itemize}
\end{convention}

Call a self-sufficient extension $B\leq A$ \emph{minimal} if it is proper and such that whenever $B\leq A'=\langle A'\rangle \leq A$, either $A'=B$ or $A'=A$.

\begin{fact}[\mbox{\cite[Lemma 6.4]{BaHiMaWa09}}]
Let $B\leq A$ be a minimal self-sufficient extension in $\Cl$. Then one of the following 
cases holds:
\begin{enumerate}
\item (\emph{algebraic}): $\U(A)=\U(B)$ and $A=\langle Ba\rangle$ for some element 
$a\in\acl(B)\setminus B$;
\item (\emph{white generic}): $\U(A)=\U(B)$ and $A=\langle Ba\rangle$ for some element 
$a\not\in\acl(B)$;
\item (\emph{green generic}): there is an element $a\in\U(A)\setminus\acl(B)$ such that 
$A=\langle Ba\rangle$;
\item (\emph{minimal prealgebraic}): $B\leq A$ is minimal prealgebraic (in the sense of 
Convention \ref{Conv}).
\end{enumerate}
\end{fact}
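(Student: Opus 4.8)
The plan is to analyze a minimal self-sufficient extension $B \leq A$ by understanding how a green linear basis can be built up, then classifying the "last step" that is forced. First I would reduce to the case where $A = \langle Ba\rangle$ for a single element $a$: since $B \leq A$ is minimal, any proper intermediate $\langle\cdot\rangle$-closed self-sufficient set must equal $B$ or $A$, and by taking an arbitrary $a \in A \setminus B$ and passing to $B \leq (\langle Ba\rangle)^{\leq B}$ (the self-sufficient closure inside $A$, which exists because $A$ itself is self-sufficient and $\ldim(\langle Ba\rangle / B)$ is finite) minimality forces $\langle Ba\rangle$ to already have self-sufficient closure $A$. A short argument using the submodularity of $\delta$ then lets one choose $a$ so that $\langle Ba\rangle = A$ outright: either $a$ is algebraic over $B$, or $a$ is transcendental, and in the latter case one controls $\ldim(\U(A)/\U(B))$.

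Next I would split on whether $\U(A) = \U(B)$ or not. If $\U(A) = \U(B)$, then $\ldim(\U(A)/\U(B)) = 0$, so $\delta(A'/B) = 2\trdeg(A'/B) \geq 0$ automatically for all intermediate $A'$; the extension is automatically self-sufficient and the only constraint is minimality on the level of algebraically closed subfields generated over $B$. Picking $a$ with $\langle Ba\rangle = A$, we are in case (1) if $a \in \acl(B)$ and case (2) if $a \notin \acl(B)$ — here one should check that minimality indeed allows $A = \langle Ba\rangle$ with $a \notin \acl(B)$, i.e.\ that no proper self-sufficient subset sits strictly between, which again follows from $\delta$ being non-negative for all white extensions. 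If $\U(A) \neq \U(B)$, pick a green element $a \in \U(A) \setminus \U(B)$; then $\ldim(\U(\langle Ba\rangle)/\U(B)) = 1$. If moreover $a \notin \acl(B)$ we land in case (3) (green generic), once we argue $\langle Ba\rangle$ is already self-sufficient, hence equal to $A$: indeed $\delta(\langle Ba\rangle/B) = 2\trdeg(a/B) - 1 \geq 1 > 0$, and for proper intermediate $B'$ one has $\ldim(\U(B')/\U(B)) = 0$ so $\delta(B'/B) \geq 0$, giving $B \leq \langle Ba \rangle$.

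The remaining, and genuinely delicate, case is when every green element $a \in \U(A)\setminus\U(B)$ lies in $\acl(B)$ — equivalently $\trdeg(\U(A)/\U(B)) < \ldim(\U(A)/\U(B))$, so $A$ is built using only "algebraic" green elements but still $\U(A) \neq \U(B)$. Here I would pick a green tuple $a$ which is a $\Q$-linear basis of $\U(A)$ over $\U(B)$, so $\ldim(\U(A)/\U(B)) = \length(a) =: n \geq 1$ and $\langle Ba\rangle = A$; then $\delta(A/B) = 2\trdeg(a/B) - n$, and since $B \leq A$ this is $\geq 0$. The main obstacle is to show that minimality forces this to be a \emph{minimal prealgebraic} extension in the sense of the Convention: one must rule out $n = 1$ (a single green algebraic element would contradict self-sufficiency, as $\delta = 2\cdot 0 - 1 < 0$ if $a \in \acl(B)$ is green and transcendence-free over $B$ — wait, $a$ algebraic over $B$ gives $\trdeg(a/B)=0$ hence $\delta < 0$, contradicting $B \leq A$; so in fact $n \geq 2$ and $\trdeg(a/B) \geq 1$), establish $\delta(A/B) = 0$ using minimality (if $\delta(A/B) > 0$ one finds a proper self-sufficient intermediate set by descending through a maximal chain and using submodularity), and finally verify $\delta(B'/B) < 0$ for all proper intermediate closed $B'$ — which is exactly where minimality of the extension, together with $\delta(A/B) = 0$ and submodularity $\delta(A/B) + \delta(B'/B') \leq \delta(B'/B) + \delta(A/B')$, is used to conclude $\delta(B'/B) < 0$ (strict, because $B'$ is neither $B$ nor $A$ and $\ldim$ jumps). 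Assembling these four pieces gives case (4). Throughout, the basic calculus of $\delta$ and $\leq$ — additivity in towers, submodularity, existence of self-sufficient closures — is quoted from \cite{BaHiMaWa09} as indicated in the text.
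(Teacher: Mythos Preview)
The paper does not give its own proof of this statement; it is quoted as a Fact from \cite[Lemma~6.4]{BaHiMaWa09}. So there is nothing to compare against directly, and I will just comment on the correctness of your sketch.

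Your outline has the right overall shape but contains a genuine gap in the case split between \emph{green generic} and \emph{minimal prealgebraic}. You argue: if $\U(A)\neq\U(B)$ and some green $a\in\U(A)\setminus\U(B)$ is transcendental over $B$, then we are in case~(3). To conclude $\langle Ba\rangle=A$ from minimality you need \emph{both} $B\leq\langle Ba\rangle$ and $\langle Ba\rangle\leq A$. You only establish the first (which is trivial, since $\langle Ba\rangle/B$ has no proper intermediate $\langle\cdot\rangle$-closed subsets). The second can genuinely fail: in a minimal prealgebraic extension of length $n\geq 2$ there is always at least one green generator $a_i$ transcendental over $B$ (since $\trdeg(a/B)=n/2\geq 1$), yet $\langle Ba_i\rangle$ is \emph{not} self-sufficient in $A$, precisely because $\delta(A/\langle Ba_i\rangle)<0$. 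So the dichotomy ``either some green element is transcendental, or all green elements are algebraic'' does not separate cases (3) and (4). The clean way to proceed, once $\U(A)\neq\U(B)$, is to first prove $A=\langle B,\U(A)\rangle$ (set $A':=\langle B,\U(A)\rangle$; then $\U(A')=\U(A)$, so $A'\leq A$ trivially, and $B\leq A'$ since $B\leq A$; minimality gives $A'=A$). Then let $n:=\ldim(\U(A)/\U(B))$ and split on $n=1$ (green generic, since $\delta\geq 0$ forces $a\notin\acl(B)$) versus $n\geq 2$ (minimal prealgebraic).

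There is also an error in your treatment of case~(4): you write that one must verify ``$\delta(B'/B)<0$ for all proper intermediate closed $B'$''. This is the wrong inequality --- it would directly contradict $B\leq A$. The definition of minimal prealgebraic requires $\delta(A/B')<0$ for every proper intermediate $B'$, and this is what minimality of $B\leq A$ gives: if $\delta(A/B')\geq 0$ for some such $B'$, then $B\leq B'\leq A$ with $B'$ proper, a contradiction. (Your submodularity display also has a typo, $\delta(B'/B')$, which makes that step unreadable.) Similarly, the argument that $\delta(A/B)=0$ when $n\geq 2$ needs more care than ``descend through a maximal chain'': one has to exhibit a proper intermediate self-sufficient $A'$ from $\delta(A/B)>0$, and this is where a genuine use of submodularity plus the green basis comes in.
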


\begin{lemma}\label{L:Kummer-strong1}
Let $B=\langle B\rangle\in\Cl$ and let $p(x)$ be a strong field type over $B$ that is Kummer generic. 
For $i=1,2$, let $B\subseteq A_i=\langle Ba_i\rangle$, 
where $a_i\models p$ 
is a green $\Q$-basis of $\U(A_i)$ over $\U(B)$. Then $a_1\mapsto a_2$ extends to an isomorphism $A_1\simeq A_2$ over $B$.
\end{lemma}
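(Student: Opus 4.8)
The plan is to build the isomorphism $A_1 \simeq A_2$ over $B$ in two stages: first at the level of the green points (the divisible groups $\U(A_i)$ together with their multiplicative structure), and then to extend to the full fields by a purely field-theoretic argument using that $p$ is a field type over $B$. Let $q = \tp(a_1/B) = \tp(a_2/B)$, so the pure-field part of the isomorphism $A_1 \simeq A_2$ fixing $B$ and sending $a_1 \mapsto a_2$ exists as soon as we know $\tp(a_1/B) = \tp(a_2/B)$ in $\ACF_0$ \emph{and} that this field isomorphism carries $\U(A_1)$ onto $\U(A_2)$. Since $A_i = \langle Ba_i\rangle$ and $\U(A_i) = \langle \U(B)\, a_i\rangle$ (because $a_i$ is a green $\Q$-basis of $\U(A_i)$ over $\U(B)$), any field isomorphism $A_1 \to A_2$ fixing $B$ and sending $a_1 \to a_2$ automatically respects the divisible hulls, hence sends $\U(A_1)$ onto $\U(A_2)$; so the whole problem reduces to showing $\tp(a_1/B) = \tp(a_2/B)$ as field types, i.e.\ that the field locus is preserved, together with the torsion/root data.

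First I would observe that $a_1$ and $a_2$ realise the same field type over $B$ \emph{by hypothesis} ($p$ is a strong field type over $B$ and $a_i \models p$), so the field isomorphism $\sigma: A_1 \to A_2$ over $B$ with $\sigma(a_1) = a_2$ exists; the only thing to check is the claim above that $\sigma$ is an $\Lan$-isomorphism, i.e.\ $\sigma(\U(A_1)) = \U(A_2)$. For the forward inclusion, take $u \in \U(A_1)$; since $a_1$ is a $\Q$-basis of $\U(A_1)$ over $\U(B)$, there are $v \in \U(B)$, a tuple of integers $m$ and $N \geq 1$ with $u^N = v \cdot a_1^{m}$. Applying $\sigma$ gives $\sigma(u)^N = v \cdot a_2^{m} \in \U(A_2)$ (using $\sigma|_B = \mathrm{id}$ and $\sigma(a_1) = a_2$). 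Now $\sigma(u) \in A_2 \subseteq K$ and $\sigma(u)^N \in \U(A_2)$; since $\U(A_2)$ is divisible, there is $w \in \U(A_2)$ with $w^N = \sigma(u)^N$, so $\sigma(u)/w$ is an $N$-th root of unity. Here is where Kummer genericity enters: it is exactly the statement that prevents the field type of $a_1$ (hence of $a_1$ together with chosen green roots) from ``jumping'' when we pass to radicals, so that $\sigma(u)$ itself — not merely a root-of-unity multiple — lies in $\U(A_2)$. Concretely, I would argue that because $p$ is Kummer generic, the variety $\sqrt[N]{V}$ (for $V$ the locus of $a_1$ over $B$) is irreducible, so $\tp(a_1, \text{$N$-th roots})/B$ is determined, and the $N$-th roots of the green coordinates can be chosen green; this forces $\sigma$ to match green roots with green roots up to a uniform choice, giving $\sigma(u) \in \U(A_2)$. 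The reverse inclusion $\sigma^{-1}(\U(A_2)) \subseteq \U(A_1)$ is symmetric.

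The main obstacle is precisely this root-matching step: a priori a field isomorphism sending a green point $u$ to a field-conjugate $\sigma(u)$ need only send $u$ to something whose $N$-th power is green, i.e.\ to a torsion multiple of a green point, and without Kummer genericity one cannot rule out that the torsion multiple is a genuinely non-green element (the torsion of $\U$ being only $\GrTor$, not all of $\Tor$). The resolution is to note that Kummer genericity of $p$ makes ``being a green $N$-th root of a given green point'' a type-definable condition over $B$ that transfers along the field isomorphism: since $\sqrt[N]{V}$ is irreducible, the locus over $B$ of $(a_1, \text{a compatible system of $N$-th roots of the green coordinates})$ is a single $B$-definable irreducible variety, and both $A_1$ and $A_2$ — being $\langle\cdot\rangle$-closed — contain such root systems lying in $\U$; as they have the same field type over $B$, $\sigma$ carries one green root system to another, whence $\sigma(\U(A_1)) = \U(A_2)$. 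Having established that $\sigma$ is an $\Lan$-isomorphism, we are done, since it fixes $B$ and sends $a_1 \mapsto a_2$ by construction.
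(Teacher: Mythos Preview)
Your argument has a genuine logical gap. You fix a field isomorphism $\sigma:A_1\to A_2$ over $B$ with $\sigma(a_1)=a_2$ \emph{first}, and then try to show that this particular $\sigma$ satisfies $\sigma(\U(A_1))=\U(A_2)$. But such a $\sigma$ is far from unique (extending from $a_1$ to the roots of $a_1$ involves choices), and for a generic choice the conclusion is simply false. Concretely: take $\GrTor$ trivial, $B$ trivial, and $a_1,a_2$ single green transcendentals. Each $A_i$ contains a \emph{unique} green square root $b_i$ of $a_i$ (uniqueness since $\U$ has no $2$-torsion), but there are two field isomorphisms sending $a_1\mapsto a_2$, one with $b_1\mapsto b_2$ and one with $b_1\mapsto -b_2$; the second does not preserve $\U$. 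Your early claim that ``$\U(A_i)=\langle\U(B)\,a_i\rangle$, so any such $\sigma$ sends $\U(A_1)$ onto $\U(A_2)$'' is therefore incorrect: the divisible hull $\langle\U(B)\,a_i\rangle$ taken in $K^*$ contains \emph{all} roots, not only the green ones, and is strictly larger than $\U(A_i)$ whenever $\GrTor\neq\Tor$.

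Your resolution paragraph has the right ingredients but the inference ``as they have the same field type over $B$, $\sigma$ carries one green root system to another'' is a non sequitur: equality of types gives you \emph{some} isomorphism matching the root systems, not the pre-chosen $\sigma$. The fix is to reverse the order of construction, and this is exactly what the paper does: first use divisibility of $\U(A_i)$ to choose a coherent system of green roots $(a_{i,n})_{n\geq 1}$ with $a_{i,n}^n=a_i$ and $(a_{i,mn})^m=a_{i,n}$; then invoke Kummer genericity of $p$ to conclude $\stp(a_{1,n}/B)=\stp(a_{2,n}/B)$ for every $n$ (since any $n$-th root of a realisation of $p$ is generic in the irreducible variety $\sqrt[n]{V}$); finally \emph{define} $\sigma$ by $a_{1,n}\mapsto a_{2,n}$ for all $n$. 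This $\sigma$ is then automatically an $\Lan^*$-isomorphism, because $\U(A_i)$ is generated over $\U(B)$ (as a group, using that green torsion lies in $\U(B)$) by the $a_{i,n}$.
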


\begin{proof}
For $i=1,2$, using the divisibility of $\U(A_i)$, construct inductively a sequence $(a_{i,n})_{n \geq 1}$ in $\U(A_i)$ such that $a_{i,n}^n = a_i$ and $(a_{i,mn})^m = a_{i,n}$ (componentwise) for all $m,n \geq 1$. Kummer genericity of $p(x)$ implies that for every $n\geq 1$, $\stp(a_{1,n}/B)=\stp(a_{2,n}/B)$,
so the map defined by $a_{1,n} \mapsto a_{2,n}$ for all $n \geq 1$ extends to an $\Lan^*$-isomorphism from $A_1$ to $A_2$ over $B$.
\end{proof}

\subsection{The uncollapsed case: green fields with green torsion equal to $\nu$}
It is shown in \cite{Poi01} 
that the class $(\Cl,\leq)$ has the amalgamation property and the joint 
embedding property. Moreover, using Lemma \ref{L:Kummer-strong1} together with 
Fact \ref{F:Kummer}(1), one may show that every $A\in\Cl$ 
of finite linear dimension has only countably many strong extensions of finite linear dimension, up to isomorphism. There 
is thus a unique countable rich $M_\omega\in\Cl$, the \emph{Fra\"{i}ss\'e-Hrushovski limit} of the 
subclass of $\Cl$ given by the structures of finite linear dimension. 
By definition, the theory of green fields, $T_\omega$, is the complete theory of the structure $M_\omega$. 
In \cite{Poi01}, Poizat notes that his results hold for arbitrary divisible green torsion $\GrTor$ if the axiomatisability of the class $\Cl$ can be established unconditionally. Since this missing point is provided by Theorem \ref{T:Torsion-Axioms}, we obtain the following result. 

\begin{theorem}\label{T:Uncollapsed}
$T_\omega$ is $\omega$-stable of Morley rank $\omega\cdot2$, with $\U$ having Morley rank $\omega$. The green torsion in models of $T_\omega$ is equal to $\GrTor$. 
\end{theorem}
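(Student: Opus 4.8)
The plan is to follow the standard analysis of Hrushovski–Fra\"iss\'e limits adapted to the predimension $\delta$, exactly as in the torsion-free case treated in \cite{Poi01} and \cite{BaHiMaWa09}, checking that the presence of green torsion changes nothing essential now that Theorem \ref{T:Torsion-Axioms} guarantees $\Cl$ is elementary. First I would recall that richness of $M_\omega$ is a first-order property of its theory $T_\omega$ (this uses the axiomatisability of $\Cl$ together with the amalgamation and joint embedding properties of $(\Cl,\leq)$, and the countability of the class of finite-dimensional structures up to isomorphism which gives existence and uniqueness of $M_\omega$), so that every model of $T_\omega$ is rich. Then I would establish that in a rich model, for a self-sufficient finite-dimensional subset $B$, the type of a finite tuple $a$ over $B$ is determined by the isomorphism type of $\langle Ba\rangle^{\leq}$, the self-sufficient closure; this is the usual back-and-forth using richness, and it reduces the computation of Morley rank to a combinatorial analysis of self-sufficient extensions.

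Next, the rank computation: one shows $\delta$ controls Morley rank via the decomposition of minimal self-sufficient extensions into the four cases (algebraic, white generic, green generic, minimal prealgebraic) recalled from \cite[Lemma 6.4]{BaHiMaWa09}. A white generic extension contributes $\omega\cdot 2$ to the rank (a transcendental white point has rank $\omega\cdot2$ because its self-sufficient closure can absorb arbitrarily many further green points), a green generic extension contributes $\omega$ (a generic green point has $\delta=1$, giving rank $\omega$), an algebraic extension contributes finite rank, and a minimal prealgebraic extension also contributes only finitely much. Summing along a $\leq$-chain and using the Lascar-type additivity of ordinal-valued rank gives $\Mr(T_\omega)=\omega\cdot2$ for the field sort and $\Mr(\U)=\omega$. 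The $\omega$-stability then follows from the fact that over a countable model there are only countably many self-sufficient closures of finite-dimensional extensions up to isomorphism, which is precisely the countability input guaranteed via Lemma \ref{L:Kummer-strong1} and Fact \ref{F:Kummer}(1); alternatively it is immediate from finiteness of Morley rank once rank is shown to be defined and finite on formulas.

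The statement about green torsion is the one genuinely new point, and it is in fact immediate: condition (iii) in the definition of $\Cl$ forces the torsion subgroup of $\U$ to be isomorphic to $\GrTor$ in every member of $\Cl$, and since every model of $T_\omega$ is (elementarily equivalent to, hence by Theorem \ref{T:Torsion-Axioms} a member of) $\Cl$, the green torsion in any model of $T_\omega$ is exactly $\GrTor$. One should note that condition (iv), via the remark in the proof of Theorem \ref{T:Torsion-Axioms} that $\U\cap\Q^{\alg}=\GrTor$, rules out any extra torsion entering through the algebraically closed field structure.

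I expect the main obstacle to be not any single hard lemma but rather the bookkeeping of verifying that the rank computation of \cite{Poi01}, \cite{BaHiMaWa09} is insensitive to torsion: one must check that the notions of genericity, the predimension inequalities, and the classification of minimal self-sufficient extensions all behave identically when $\U$ has torsion $\GrTor$ rather than being torsion-free. The key observation making this routine is that $\ldim$ is computed in $K^*/\Tor$, so torsion elements are invisible to the predimension, and hence all the predimension-based arguments transfer verbatim; the only care needed is that the Fra\"iss\'e construction is carried out inside the (now known to be elementary) class $\Cl$ rather than an a priori non-elementary class, which is exactly what Theorem \ref{T:Torsion-Axioms} supplies.
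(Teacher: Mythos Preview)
Your proposal is correct and matches the paper's approach exactly: the paper simply cites Poizat's observation that his results go through for arbitrary divisible green torsion once axiomatisability of $\Cl$ is known, and invokes Theorem~\ref{T:Torsion-Axioms} to supply that missing ingredient. One small slip worth fixing: richness is not first-order in the sense you state---rather, the rich structures are precisely the $\omega$-saturated models of $T_\omega$ (as the paper notes just after the theorem), and similarly $\omega$-stability follows from the Morley rank being ordinal-valued, not ``finite''.
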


A structure in $\Cl$ is rich if and only if it is an $\omega$-saturated model of $T_\omega$. It follows that the type of a $\langle\cdot\rangle$-closed self-sufficient subset of a model of 
$T_\omega$ is determined by its quantifier free type (in $\Lan^*$). 
Using code properties 
(\ref{C:prealg}) and (\ref{C:sm}), Lemma \ref{L:Kummer-strong1} yields the following result.

\begin{lemma}\label{L:Kummer-strong2}
Let $\alpha\in\Clc$, $b\in B=\langle B \rangle \leq M\models T_\omega$ and $a_i\in M $ with 
$M\models\phi_\alpha(a_i,b)$ and $a_i\not\in B$ for $i=1,2$. 
Then $\tp_{T_\omega}(a_1/B)=\tp_{T_\omega}(a_2/B)$.\qed
\end{lemma}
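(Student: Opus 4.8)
The plan is to reduce the statement to Lemma \ref{L:Kummer-strong1} via the code properties, together with the fact (recalled just above the lemma) that in $T_\omega$ the type of a $\langle\cdot\rangle$-closed self-sufficient subset of a model is determined by its quantifier-free $\Lan^*$-type. First I would pass to the substructures $A_i := \langle Ba_i\rangle$ for $i = 1,2$. By code property (\ref{C:sm}), since $a_i \notin B = \langle B\rangle$ and $a_i \models \phi_\alpha(x,b)$ with $b \in B$, the tuple $a_i$ is $B$-generic in $\phi_\alpha(x,b)$; in particular $\delta(a_i/B) = 0$, and since $\phi_\alpha(x,b)$ is minimal prealgebraic by (\ref{C:prealg}), the extension $B \leq A_i$ is minimal prealgebraic in the sense of Convention \ref{Conv} (the minimality of the intermediate steps comes from the third clause in the definition of minimal prealgebraic, $\delta(B/B') < 0$, which forces $\delta(A_i/B') < 0$ for proper intermediate $B'$, whence no proper self-sufficient intermediate structure other than $B$ itself). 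So $B \leq A_i$ is a self-sufficient extension, and $a_i$ is a green $\Q$-linear basis of $\U(A_i)$ over $\U(B)$.

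Next I would invoke code property (\ref{C:prealg}) once more: $\phi_\alpha(x,b)$ is Kummer generic, so its generic type $p(x) := \tp_{\ACF_0}(a_i/B)$ is a Kummer generic strong field type over $B$ (it does not depend on $i$, since $a_1$ and $a_2$ are both generic in the irreducible variety $V_\alpha(x,b)$, hence have the same field type over $B$). Now Lemma \ref{L:Kummer-strong1} applies verbatim: the map $a_1 \mapsto a_2$ extends to an $\Lan^*$-isomorphism $A_1 \simeq A_2$ over $B$. In particular $A_1$ and $A_2$ have the same quantifier-free $\Lan^*$-type over $B$.

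Finally, $B \leq A_i \leq M$ need not hold a priori — $A_i$ is self-sufficient over $B$ but perhaps not in $M$ — so I would take the self-sufficient closures $A_i^* := \cl_M(A_i)$ inside $M$. Since $B \leq M$ and $B \leq A_i$, one has $A_i \leq A_i^*$ with $A_i^*$ obtained from $A_i$ by the same (field-theoretic, over the common generic type) closure operation on both sides; using that $\tp_{\ACF_0}(a_i/B)$ coincides for $i = 1,2$ and that $M$ is $\omega$-saturated (so the self-sufficient closure is realised and its isomorphism type over $B$ is determined), the isomorphism $A_1 \simeq A_2$ over $B$ extends to $A_1^* \simeq A_2^*$ over $B$. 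Both $A_i^*$ are $\langle\cdot\rangle$-closed and self-sufficient in $M$, so by the remark preceding the lemma their quantifier-free $\Lan^*$-types over $B$ determine their full $T_\omega$-types over $B$; hence $\tp_{T_\omega}(a_1/B) = \tp_{T_\omega}(a_2/B)$.

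I expect the main obstacle to be the bookkeeping in the last paragraph — controlling the self-sufficient closure $\cl_M(A_i)$ and checking that the isomorphism of Lemma \ref{L:Kummer-strong1} propagates to it — rather than anything deep; the genuinely substantial input (Kummer genericity making the field type, and all its $m$-th root liftings, rigid over $B$) is already packaged into Lemma \ref{L:Kummer-strong1} and code property (\ref{C:prealg}). One subtlety to handle with care is that $a_i$ is only asserted to be a green $\Q$-\emph{basis} of $\U(A_i)/\U(B)$, matching the hypothesis of Lemma \ref{L:Kummer-strong1}, and this is exactly what code property (\ref{C:sm}) together with minimal prealgebraicity delivers, so the hypotheses line up cleanly.
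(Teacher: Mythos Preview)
Your proposal is correct and follows exactly the paper's route: code properties (\ref{C:prealg}) and (\ref{C:sm}) feed the hypotheses of Lemma~\ref{L:Kummer-strong1}, and the quantifier-free type of a $\langle\cdot\rangle$-closed self-sufficient set determines its $T_\omega$-type. The concern in your third paragraph is unnecessary: since $\delta(A_i/B)=0$ and $B\leq M$, for any $A_i\subseteq C=\langle C\rangle\subseteq M$ one has $\delta(C/A_i)=\delta(C/B)-\delta(A_i/B)=\delta(C/B)\geq 0$, so $A_i\leq M$ already holds and no passage to a self-sufficient closure is needed.
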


The following remark, an immediate consequence of the lemma, clarifies the role of Kummer genericity and green torsion as far as multiplicity 
issues in $T_\omega$ are concerned. Indeed, the corresponding remark 
\cite[Bemerkung 6.7]{BaHiMaWa09} is incorrect as stated, and one needs to add Kummer genericity 
to the assumptions. 

\begin{remark}\label{R:sm}
Let $\phi_\alpha(x,z)$ 
be a code. Assume that $\models \theta_\alpha(b)$. Then $\phi_\alpha(x,b)\wedge\bigwedge_{i=1}^n \U(x_i)$ is a strongly minimal formula 
in $T_\omega$.

In fact, one may show that it would be enough to require in the definition of a Kummer generic variety 
$V$ that $\sqrt[p]{V}$ is irreducible for those primes $p$ for which $\GrTor$ does not contain 
any (non-trivial) $p$-torsion.
\end{remark}

\subsection{The collapse to a bad field}
In order to collapse the green field of Poizat we obtained to a bad field, we may proceed as in \cite[Sections 7--11]{BaHiMaWa09}. Alas, the procedure is quite technical. The idea is to forbid infinitely many realisations of the same minimal prealgebraic extension. The codes allow us to work uniformly in parameters, and the notion of difference sequences helps to address the delicate issue of controlling the interactions between different minimal prealgebraic extensions. 

\smallskip

We choose functions $\mu^*,\mu:\Clc\rightarrow\N$ with finite fibres satisfying some technical conditions, namely (1) $\mu^*(\alpha)\geq n_\alpha k_\alpha+1$, (2) $\mu^*(\alpha)\geq\lambda_\alpha(m_\alpha+1)$ and (3) $\mu(\alpha)\geq\lambda_\alpha(\mu^*(\alpha))$. Here, 
$\lambda_\alpha:\N\rightarrow\N$ is some strictly increasing function related to the conclusion of 
\cite[Lemma 7.3]{BaHiMaWa09}. We now define the class $\Clm$ as the subclass of $\Cl$ consisting of those $M$ which 
do not contain any green difference sequence of $\alpha$ of length $\mu(\alpha)+1$, for any 
good code $\alpha$.

\smallskip

In the proofs of the following two propositions (which correspond to \cite[Folgerung 8.4]{BaHiMaWa09} 
and \cite[Satz 9.2]{BaHiMaWa09}, respectively), Kummer genericity is used in an essential way. 
At the end of this section, we will present the argument for one of them in detail, namely for Proposition \ref{P:8.4}, since in this case the use of Kummer genericity is less apparent.

\begin{prop} \label{P:8.4}
For every good code $\alpha$, there is a $\forall\exists$-sentence $\chi_\alpha$ such that for every algebraically closed structure $M$ in $\Cl^\mu$,  the sentence $\chi_\alpha$ holds in $M$ if and only if $M$ has no minimal prealgebraic extensions in $\Cl^\mu$ coded by $\alpha$.
\end{prop}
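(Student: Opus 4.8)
The goal is to produce a single $\forall\exists$-sentence $\chi_\alpha$ equivalent, over algebraically closed members of $\Clm$, to the non-existence of a minimal prealgebraic extension coded by $\alpha$. The plan is to first translate the geometric/combinatorial condition ``$M$ has a minimal prealgebraic extension coded by $\alpha$'' into the existence in some larger model of a realisation $a$ of $\phi_\alpha(x,b)$ for some $b\in M$ with $a$ green and $B\le Ba$, and then to show that the obstruction to embedding such an $a$ back into $M$ itself (given that $M\in\Clm$) is the appearance, inside $M$, of a full green difference sequence of $\alpha$ of length $\mu(\alpha)+1$ that is ``built on'' $b$. Concretely: if $M$ has a minimal prealgebraic extension coded by $\alpha$ via some $b$, then by richness-type amalgamation one may add $\mu(\alpha)+1$ independent realisations $e_0,\dots,e_{\mu(\alpha)}$ of $\phi_\alpha(x,b)$ over $M$; by code property (\ref{D:MF}) the translated tuple $(e_0\inv{f},\dots,e_{\mu(\alpha)}\inv{f})$ is a green difference sequence, contradicting $M\in\Clm$ — but wait, these $e_i$ are not in $M$. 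So the real content is the converse direction and the uniform-in-$b$ bound: one shows that if $M\in\Clm$ has such an extension for a parameter $b$, then $M$ \emph{already} contains a green difference sequence of $\alpha$ of length $\mu(\alpha)+1$ with canonical parameter $b$, by the choice of $\mu$ and the difference-sequence machinery (properties (\ref{D:versch})–(\ref{D:diff}), together with $\mu(\alpha)\ge\lambda_\alpha(\mu^*(\alpha))$ and $\mu^*(\alpha)\ge\lambda_\alpha(m_\alpha+1)$).

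The sentence $\chi_\alpha$ would then read, roughly: for all $b$, if $\theta_\alpha(b)$ holds and $b$ is the canonical parameter of some green difference sequence of $\alpha$ of length exactly $\mu(\alpha)$ living in $M$ (a first-order, indeed $\forall$-expressible-with-parameters condition on $b$, using $\psi_\alpha$ of length $\mu(\alpha)$ and property (\ref{D:Kode_dcl})), then there exists a further green realisation $f$ of $\phi_\alpha(x,b)$ in $M$ extending that sequence to length $\mu(\alpha)+1$ that is \emph{already accounted for}, i.e.\ such that appending it does not create a new independent difference point — equivalently, $M$ contains enough realisations of $\phi_\alpha(x,b)$ that a genuine fresh minimal prealgebraic extension is impossible. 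The precise formulation is the one from \cite[Folgerung 8.4]{BaHiMaWa09}; the point to verify is that it is $\forall\exists$ and that the equivalence with ``no minimal prealgebraic $\alpha$-extension in $\Clm$'' survives verbatim, using that $\alpha$ is a \emph{good} code (Proposition \ref{P:Good-Codes}) and Fact \ref{F:Galpha} to keep the relevant correspondence tori $T\in G(\alpha,\alpha)$ finite in number, so everything remains first-order.

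The step where the improved codes matter, and which I expect to be the main obstacle, is ensuring that the minimal prealgebraic extension realised over $b$ is \emph{unique up to isomorphism over $B$}, so that counting realisations inside $M$ against the threshold $\mu(\alpha)+1$ is meaningful. This is exactly Lemma \ref{L:Kummer-strong2} (hence code property (\ref{C:prealg}), Kummer genericity): without it, two realisations $a_1,a_2\models\phi_\alpha(x,b)$ with $a_i\notin B$ could have different types over $B$, and a parameter $b$ could simultaneously ``use up'' its budget of difference points on one $B$-type while still admitting a fresh extension of another — breaking the equivalence. So the proof would proceed: (1) reduce to a single $B$-type via Lemma \ref{L:Kummer-strong2}; (2) use difference sequences and the arithmetic of $\mu,\mu^*,\lambda_\alpha$ to convert ``$\ge\mu(\alpha)+1$ independent green realisations over $b$ in some extension'' into ``a green difference sequence of length $\mu(\alpha)+1$ already inside $M$'' when $M$ is algebraically closed; (3) negate and first-order-ise, invoking $G(\alpha,\alpha)$ finite and definability of $\theta_\alpha$, to write $\chi_\alpha$ in $\forall\exists$ form; (4) check both implications of the stated biconditional. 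The genuinely new verification beyond \cite{BaHiMaWa09} is that green torsion points among the coordinates of realisations of $\phi_\alpha(x,b)$ cause no trouble here — but by code property (\ref{C:prealg}) the generic realisations are multiplicatively independent over $B$, so torsion coordinates do not arise for the relevant $a$, and the argument goes through unchanged.
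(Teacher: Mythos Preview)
Your proposal has a genuine gap: you only treat the obstruction coming from the code $\alpha$ itself, and miss entirely the case where adjoining a green generic $a\models\phi_\alpha(x,b)$ creates, inside $M[a]$, a green difference sequence of length $\mu(\beta)+1$ for some \emph{other} good code $\beta$. This is exactly case (b) of Lemma~\ref{L:8.3}, and it cannot be detected by your proposed sentence, which only quantifies over difference sequences of $\alpha$ with canonical parameter (toric-correspondent to) $b$. In the paper, case (b) is handled by a geometric argument: one bounds the finitely many $\beta$ that can occur via $\mu^*(\beta)\leq n_\alpha$, writes $\psi_\beta$ as a constructible set $\bigcup_k(V_k\setminus Z_k)$, applies the Weak CIT to the family $V_\alpha(x,z)\times V_k$ to obtain a finite list of tori $\{T_0,\ldots,T_s\}$, and then expresses the obstruction as the existence of a green coset $mT_\tau$ and an irreducible component $W'$ of $(V_0\times V_k)\cap mT_\tau$ satisfying four definable conditions (generic projection onto $V_0$, matching codimension, $\psi_\beta$ holding at a generic point, greenness of the defining constants). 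None of this appears in your plan, and without it $\chi_\alpha$ does not characterise the non-existence of $\alpha$-coded extensions in $\Clm$.

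You are right that Kummer genericity is the crux, but you mislocate its use. Lemma~\ref{L:Kummer-strong2} is invoked in case (b), not to ``reduce to a single $B$-type'' for counting purposes, but to ensure that the isomorphism type of $M[a]$ over $M$ coincides with that of $M[a^*]$ (where $a^*$ is generic in the $\cd$-maximal $W'$), so that $M[a]\in\Clm\Leftrightarrow N\in\Clm$. In case (a), it is Lemma~\ref{C:Kg} that is needed: Kummer genericity of $\phi_\alpha(x\cdot m,b)$ guarantees that the toric correspondence with the last difference-sequence slot determines a \emph{unique} target variety, so that the existential condition $\theta^{(a)}$ really captures the obstruction. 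Finally, your closing remark is off target: green torsion plays no role in this proof (the paper says so explicitly); the issue being repaired relative to \cite{BaHiMaWa09} is purely the Kummer-genericity gap, independent of $\GrTor$.
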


\begin{prop}\label{P:9.2}
The class $(\Clm,\leq)$ has the amalgamation property.
\end{prop}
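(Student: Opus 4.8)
The plan is to follow the strategy of \cite[Satz 9.2]{BaHiMaWa09} almost verbatim, pointing out that the presence of green torsion changes nothing essential. Suppose we are given $B\leq A_1$ and $B\leq A_2$ in $\Clm$; by the usual reductions (taking self-sufficient hulls, working with finitely generated-over-$B$ pieces and passing to a limit, and decomposing a self-sufficient extension into a chain of minimal ones via the Fact following Convention \ref{Conv}) it suffices to treat the case where $A_1/B$ is a single minimal self-sufficient extension of one of the four types (algebraic, white generic, green generic, minimal prealgebraic). In the first three cases the free amalgam $A_1\otimes_B A_2$ — i.e.\ make $A_1$ and $A_2$ as algebraically independent as possible over $B$ and put $\U$ equal to the group generated by $\U(A_1)$ and $\U(A_2)$ — already lies in $\Clm$: one checks it is in $\Cl$ (predimension computations as in \cite{BaHiMaWa09}, which are insensitive to torsion since $\ldim$ is computed modulo $\Tor$) and that it contains no new green difference sequences because $\U$ has not grown "in a prealgebraic way". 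So the real content is the minimal prealgebraic case.

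So assume $A_1=\langle Ba_1\rangle$ with $\tp(a_1/B)$ minimal prealgebraic, coded by some good code $\alpha\in\Clc$, say $\models\phi_\alpha(a_1,b)$ with $b\in B$. First I would try the free amalgam $C=A_1\otimes_B A_2$; if $C\in\Clm$ we are done. If not, then $C$ contains a green difference sequence of some good code $\beta$ of length $\mu(\beta)+1$ that was not already present in $A_1$ or in $A_2$. The key point, exactly as in \cite{BaHiMaWa09}, is that such a sequence must "essentially come from" the new prealgebraic extension: using the difference-sequence machinery (properties (\ref{D:MF}), (\ref{D:Kode_dcl}), (\ref{D:Kode_fork}) above) together with Fact \ref{F:Galpha} on the finiteness of $G(\alpha,\alpha)$, one shows $\beta=\alpha$ and that the offending sequence arises from $a_1$ together with $\mu(\alpha)$-many realisations of $\phi_\alpha(x,b)$ already living in $A_2$. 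But then $A_2$ already contained a green difference sequence of $\alpha$ of length $\mu(\alpha)$, and since we may instead amalgamate so that $a_1$ is generic over $A_2$ in $\phi_\alpha(x,b)$ — which by code property (\ref{C:sm}) and the choice of $\mu^*,\mu$ via $\lambda_\alpha$ does not create a sequence of length $\mu(\alpha)+1$ — we obtain an amalgam in $\Clm$. This is where the choice of the functions $\mu^*,\mu$ with the three technical inequalities, and the improved (Kummer generic) codes, are used: Kummer genericity guarantees, via Lemma \ref{L:Kummer-strong2}, that the type of such a generic realisation over $A_2$ is uniquely determined, so the amalgam is well defined and its quantifier-free type controls everything.

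The main obstacle — and the place where I would have to be most careful — is precisely the bookkeeping in the minimal prealgebraic case: verifying that a putative new long green difference sequence in the amalgam is forced to be a sequence of the code $\alpha$ of the extension we just added, and that it involves at most $\mu(\alpha)$ "old" realisations from $A_2$ plus the new point $a_1$. This is the heart of \cite[Satz 9.2]{BaHiMaWa09} and relies on the full strength of \cite[Lemma 7.3]{BaHiMaWa09} (encoded in the function $\lambda_\alpha$) and on Fact \ref{F:Galpha}. Since none of these ingredients involves the torsion of $\U$ — $\delta$, $\leq$, the codes, and the difference sequences are all defined in terms of $\trdeg$ and $\ldim(\cdot)=\ldim(\cdot/\Tor)$, which ignore torsion — the argument of \cite{BaHiMaWa09} transfers without change, the only genuinely new input being Theorem \ref{T:Torsion-Axioms} (used to know $\Clm\subseteq\Cl$ is elementary and behaves well) and the Kummer-generic codes of Section \ref{S:Bad-Context}. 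Hence $(\Clm,\leq)$ has the amalgamation property.
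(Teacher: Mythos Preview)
Your overall strategy matches the paper's, which does not give a detailed proof of Proposition~\ref{P:9.2} at all: it simply records that the argument of \cite[Satz 9.2]{BaHiMaWa09} goes through and that Kummer genericity is used in an essential way. Your reduction to a single minimal self-sufficient extension on one side, the split into the four types of minimal extension, the identification of the minimal prealgebraic case as the only nontrivial one, and the remark that torsion plays no role (since $\delta$, $\leq$, codes and difference sequences are phrased in terms of $\trdeg$ and $\ldim$ modulo $\Tor$) are all on target.

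There is, however, a genuine confusion in your description of the alternative amalgamation when the free amalgam fails. You write that once $A_2$ is seen to contain a green difference sequence of length $\mu(\alpha)$, ``we may instead amalgamate so that $a_1$ is generic over $A_2$ in $\phi_\alpha(x,b)$ --- which [\ldots] does not create a sequence of length $\mu(\alpha)+1$''. But in the free amalgam $a_1$ \emph{is} generic over $A_2$; that is precisely the situation you have just assumed produces a forbidden sequence, so this cannot be the fix. The correct alternative is the opposite move: one embeds $A_1$ into $A_2$ over $B$ by sending $a_1$ to one of the green realisations of $\phi_\alpha(x,b)$ already present in $A_2$, so that $A_2$ itself (possibly after a harmless algebraic extension) serves as the amalgam. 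This is exactly the point at which Lemma~\ref{L:Kummer-strong2} is indispensable: Kummer genericity of $\phi_\alpha(x,b)$ guarantees that any two green realisations outside $B$ have the same $T_\omega$-type over $B$, hence the same quantifier-free $\Lan^*$-type, so the map $a_1\mapsto e$ really does extend to a coloured embedding $A_1\hookrightarrow A_2$. Without the improved codes there could be inequivalent choices of green roots above a generic point, and this embedding need not exist --- that was Roche's gap. So your instinct about where Kummer genericity enters is right, but it is used to \emph{identify} $a_1$ with an element of $A_2$, not to place it generically over $A_2$.
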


Let $M^\mu$ be the Fra\"{i}ss\'e-Hrushovski limit of the class $(\Clm,\leq)$, i.e., the unique countable 
structure in $\Clm$ that is rich for the subclass of $\Clm$ of all structures of finite 
linear dimension. Set $T^\mu:=\mathrm{Th}(M^\mu)$. 

Consider the theory $\tilde{T}^\mu$ expressing the following for an $\Lan^*$-structure $M$:

\begin{enumerate}
\item $M\in\Clm$;
\item $M\models\ACF_0$;
\item $M\models\chi_\alpha$ for every good code $\alpha\in\Clc$;
\item axioms which guarantee that if $M$ is $\omega$-saturated, then there are elements 
$g_i\in \U(M)$, $i\in\N$, such that $\dd(g_1,\ldots,g_n)=n$ for every $n$.
\end{enumerate}

Here, $\dd$ denotes the usual dimension function associated to the predimension $\delta$ (see \cite[Definition 10.1]{BaHiMaWa09}).

The axiomatisability of (1) follows from Theorem \ref{T:Torsion-Axioms}, and the $\chi_\alpha$ in (3) are from Proposition \ref{P:8.4}. Finally, (4) is axiomatisable by \cite[Lemma 10.3]{BaHiMaWa09}.

The theory $\tilde{T}^\mu$ we just defined is an axiomatisation of $T^\mu$. Indeed, the following proposition is proved exactly as \cite[Satz 10.5]{BaHiMaWa09}.

\begin{prop}\label{P:Rich=saturated}
The $\omega$-saturated models of $\tilde{T}^\mu$ are precisely the rich structures in $\Clm$. 
In particular, $T^\mu=\tilde{T}^\mu$.
\end{prop}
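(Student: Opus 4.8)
The plan is to mimic the proof of \cite[Satz 10.5]{BaHiMaWa09} essentially verbatim, since the statement is that $\tilde{T}^\mu$ axiomatises $T^\mu$, with the only genuinely new input being that the relevant building blocks (Theorem~\ref{T:Torsion-Axioms}, Proposition~\ref{P:8.4}, and the Kummer-generic variant of the code machinery) have already been adapted to the present setting. First I would verify that $M^\mu\models\tilde{T}^\mu$: items (1) and (2) are immediate from $M^\mu\in\Clm$ and the definition of $\Clm$ as a subclass of $\Cl$; item (4) follows from richness of $M^\mu$ together with \cite[Lemma 10.3]{BaHiMaWa09} (one can successively adjoin green generic points, staying inside $\Clm$); and item (3), that $M^\mu\models\chi_\alpha$ for every good code $\alpha\in\Clc$, follows from Proposition~\ref{P:8.4} once one observes that a rich structure in $\Clm$ cannot have any minimal prealgebraic extension coded by $\alpha$ \emph{inside $\Clm$} — for if it did, richness would force that extension to be realised, contradicting nothing directly, so one argues as in \cite{BaHiMaWa09} that realising such an extension infinitely often would produce a forbidden green difference sequence of length $\mu(\alpha)+1$. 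Hence $T^\mu\vdash\tilde{T}^\mu$.

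For the converse I would show that every $\omega$-saturated model $M\models\tilde{T}^\mu$ is rich in $\Clm$, which gives $\tilde{T}^\mu\vdash T^\mu$ by completeness of $T^\mu$ (the Fra\"iss\'e--Hrushovski limit being $\omega$-saturated and unique, any two $\omega$-saturated models of $\tilde{T}^\mu$ are back-and-forth equivalent over the common $\langle\cdot\rangle$-closed self-sufficient substructures, using that quantifier-free type determines type as recorded after Theorem~\ref{T:Uncollapsed}). So let $M\models\tilde{T}^\mu$ be $\omega$-saturated, let $B\leq M$ be finitely generated and self-sufficient, and let $B\leq A$ be a minimal self-sufficient extension in $\Clm$; I must realise $A$ over $B$ in $M$. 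Using the classification of minimal self-sufficient extensions (Fact after Convention~\ref{Conv}), split into the four cases: algebraic, white generic, green generic, minimal prealgebraic. The first two are handled by $\ACF_0$-saturation as in \cite{BaHiMaWa09}; the green generic case uses axiom (4) together with $\omega$-saturation to locate a green point of the right predimension behaviour over $B$; and the minimal prealgebraic case — the heart of the matter — uses that $A$ is coded by some good code $\alpha$ (Proposition~\ref{P:Good-Codes}), that $M\models\chi_\alpha$ so $M$ \emph{does} have a minimal prealgebraic extension coded by $\alpha$ over any finitely generated self-sufficient subset — wait, $\chi_\alpha$ says the opposite; rather one uses that $\chi_\alpha$ \emph{fails} to forbid the specific extension $A/B$ because $A\in\Clm$, and combines this with $\omega$-saturation and Lemma~\ref{L:Kummer-strong2} (which is exactly where Kummer genericity enters, guaranteeing the realisation has the correct strong type over $B$) to produce the required copy of $A$ inside $M$.

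Let me restate the minimal prealgebraic step more carefully, since it is the one I expect to be the main obstacle. Given $B\leq A$ minimal prealgebraic in $\Clm$, coded by a good code $\alpha\in\Clc$ via a correspondence torus $T$, one wants $a\in M$ with $M\models\phi_\alpha(a,b)$ for the appropriate $b\in\dcl(B)$, such that $Ba$ realises $A$ self-sufficiently and staying in $\Clm$. Existence of \emph{some} $a$ with $M\models\phi_\alpha(x,b)$ is what $\chi_\alpha$ controls: because $A\in\Clm$, the extension $A/B$ is not among those forbidden, and $M\models\chi_\alpha$ tells us precisely which minimal prealgebraic $\alpha$-extensions of finitely generated self-sufficient substructures are \emph{absent} from $M$ — namely exactly those that would push $M$ out of $\Clm$ — so the extension $A/B$, being in $\Clm$, must be present in $M$ (this is the content of \cite[Satz 10.5]{BaHiMaWa09} in our setting, and uses that $\chi_\alpha$ is $\forall\exists$ so it reflects to the $\omega$-saturated $M$). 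Then Lemma~\ref{L:Kummer-strong2} upgrades ``present'' to ``present with the right type'', and code properties (\ref{C:prealg})–(\ref{C:sm}) ensure $B\leq Ba\leq M$ with $\delta$ behaving correctly, so that $Ba\in\Clm$; applying the correspondence torus $T$ gives back $A$ itself. The main obstacle is thus the bookkeeping that $M\models\chi_\alpha$ for \emph{all} good $\alpha$ simultaneously suffices to realise every single minimal self-sufficient extension within $\Clm$ — but this is exactly the argument of \cite[Satz 10.5]{BaHiMaWa09}, and since difference sequences, codes, and Kummer genericity have all been set up compatibly in Sections~\ref{S:Bad-Context}–\ref{S:Proof}, it goes through without change; the presence of green torsion affects nothing, as every step quantifies over green tuples and uses only $\delta$, $\leq$, and the code/difference-sequence apparatus, none of which sees the torsion subgroup $\GrTor$ beyond what Theorem~\ref{T:Torsion-Axioms} already absorbed.
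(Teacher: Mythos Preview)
Your approach is exactly the paper's: the paper gives no proof of this proposition at all, simply stating that it ``is proved exactly as \cite[Satz 10.5]{BaHiMaWa09}'', and you propose to do precisely that, correctly identifying the new inputs (Theorem~\ref{T:Torsion-Axioms}, Proposition~\ref{P:8.4}, Lemma~\ref{L:Kummer-strong2}) that make the cited argument go through in the present setting.

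One remark on your informal sketch: your description of how $\chi_\alpha$ is used in the minimal prealgebraic case is garbled. The sentence $\chi_\alpha$ speaks about extensions of $M$, not of the finitely generated $B\leq M$; it does \emph{not} ``tell us which $\alpha$-extensions of finitely generated self-sufficient substructures are absent from $M$''. The actual mechanism is that $M\models\chi_\alpha$ forces $\theta^{(a)}(b)\vee\theta^{(b)}(b)$ for the relevant parameter $b$, and the witnesses to these existential formulas --- green difference sequences already present in $M$ --- themselves furnish (after the toric correspondence and translation recorded in $\theta^{(a)}$, or the analogous data in $\theta^{(b)}$) green generic realisations of $\phi_\alpha(x,b)$ inside $M$; then Lemma~\ref{L:Kummer-strong2} identifies the type. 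Since you defer to \cite[Satz 10.5]{BaHiMaWa09} for the details anyway, this confusion in the sketch is harmless, but you should not leave the misleading paraphrase in a final write-up.
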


As in the case without green torsion, the theory $T^\mu$ has some level of 
quantifier elimination (the quantifier free type of a $\langle\cdot\rangle$-closed self-sufficient set 
determines its type) and is model complete (cf. \cite[Folgerung 10.6 and 10.7]{BaHiMaWa09}). 

\smallskip

We may now state our main result, the analogue of \cite[Satz 11.2]{BaHiMaWa09}.

\begin{theorem}\label{T:BadField}
$T^\mu$ has Morley rank 2, with $\U$ being strongly minimal. In particular, $M^{\mu}$ is a bad field of rank 2 with green torsion $\U\cap\Tor=\GrTor$.
\end{theorem}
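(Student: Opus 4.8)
The plan is to follow the blueprint of \cite[Satz 11.2]{BaHiMaWa09}, transferring each step to the present setting with green torsion $\GrTor$, and checking that the only substantive change — the replacement of the original codes by the Kummer-generic improved codes of Section~\ref{S:Bad-Context} — does not disturb the rank computation. First I would record the structural facts already in hand: by Proposition~\ref{P:Rich=saturated} the theory $T^\mu$ is axiomatised by $\tilde T^\mu$, it has the quantifier-elimination property that the quantifier-free type of a $\langle\cdot\rangle$-closed self-sufficient subset of a model determines its type, and it is model complete; moreover by Theorem~\ref{T:Torsion-Axioms} membership in $\Cl$ (hence in $\Clm$) is elementary, and condition (iii) of the definition of $\Cl$ forces the green torsion in any model of $T^\mu$ to be exactly $\GrTor$, which gives the last assertion of the theorem once the rank is known. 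So the whole content is the inequality $\Mr(T^\mu)\leq 2$ together with $\Mr(\U)\geq 1$ being realised by a strongly minimal formula; the lower bound $\Mr(T^\mu)\geq 2$ is immediate since $\U$ is infinite, co-infinite and definable.

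The heart of the matter is the rank bound, which in \cite{BaHiMaWa09} is obtained by analysing, for $a$ in a model $M\models T^\mu$ and a self-sufficient parameter set $B\leq M$ with $B=\langle B\rangle$, the possible values of $\Mr(\tp(a/B))$ via the decomposition of the self-sufficient closure $\langle Ba\rangle^{\leq}$ into a chain of minimal self-sufficient extensions of the four types listed after Convention~\ref{Conv} (algebraic, white generic, green generic, minimal prealgebraic). The key point is that in $\Clm$ a minimal prealgebraic extension coded by a good code $\alpha$ can only be realised boundedly often over $B$ (at most $\mu(\alpha)$ times, by the very definition of $\Clm$ via forbidden difference sequences), so such extensions contribute only finitely to the rank; white generic steps contribute rank $2$ each — but the predimension constraint $\delta\geq 0$ together with the $\chi_\alpha$-axioms from Proposition~\ref{P:8.4} (which say that over a self-sufficient set no new minimal prealgebraic extension coded by $\alpha$ is realised, i.e.\ the model is \emph{rich enough} that genericity of $a$ over $B$ is witnessed) pins down the generic type of the home sort to have rank exactly $2$; green generic steps contribute rank $1$ and, by the axioms in clause (4) of $\tilde T^\mu$ guaranteeing an infinite $\dd$-independent green sequence, $\U$ contains a generic type of rank $\geq 1$. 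Combining, the generic type of $M$ over $\emptyset$ has rank $2$ and every type has rank $\leq 2$; and the generic green type, having rank $1$ with $\U$ co-stable of finite rank, is strongly minimal — here one invokes Remark~\ref{R:sm}, where it is exactly the Kummer genericity built into code property (\ref{C:prealg}) that makes $\phi_\alpha(x,b)\wedge\bigwedge_i\U(x_i)$ strongly minimal, so that the relevant minimal prealgebraic pieces behave as single Morley-rank-$1$ blocks rather than splitting.

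I expect the main obstacle — or rather the main point requiring genuine care rather than routine transcription — to be verifying that the presence of torsion in $\U$ does not create extra types or extra multiplicity in the green sort beyond what Kummer genericity already controls. Concretely: in the torsion-free case one argues that a minimal prealgebraic formula defines an irreducible variety whose $m$-th roots stay irreducible, and this is what keeps Morley degree $1$ along the collapsing chain; with green torsion $\GrTor$ one must check (as flagged in Remark~\ref{R:sm}) that it suffices to control $\sqrt[p]{V}$ for primes $p$ with no nontrivial $p$-torsion in $\GrTor$, and that for the remaining primes the would-be extra components are absorbed by the torsion already present in $\U$ — so that Lemma~\ref{L:Kummer-strong2}, and hence the uniqueness of the type $\tp_{T^\mu}(a/B)$ of a realisation of a good code over a self-sufficient base, still holds verbatim. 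Once that is in place, the counting of ranks along minimal self-sufficient chains is literally the argument of \cite[Sections~10--11]{BaHiMaWa09}, and I would simply cite it, indicating that the four case-types and their rank contributions ($0$ or finite, $2$, $1$, bounded) are unchanged, that the $\chi_\alpha$ of Proposition~\ref{P:8.4} and clause (4) of $\tilde T^\mu$ supply exactly the genericity witnesses needed, and that strong minimality of $\U$ follows from Remark~\ref{R:sm} applied to any good code together with the fact that every green element lies in $\acl$ of an instance of such a code over a self-sufficient set.
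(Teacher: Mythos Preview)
Your approach is the same as the paper's: follow \cite[Satz 11.2]{BaHiMaWa09} and observe that nothing in the rank computation is affected by green torsion once the improved codes are in place. In fact the paper's entire proof is the single sentence ``as in \cite{BaHiMaWa09} one shows that $\Mr(a/B)=\dd(a/B)$ holds in any model of $T^\mu$'', so your elaboration of the chain-of-minimal-extensions argument is more detailed than what the paper provides.

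That said, there is a genuine confusion in your sketch. You invoke Remark~\ref{R:sm} to deduce strong minimality of $\U$ in $T^\mu$, but that remark is a statement about $T_\omega$, the \emph{uncollapsed} theory: there $\phi_\alpha(x,b)\wedge\bigwedge_i\U(x_i)$ is strongly minimal, whereas in $T^\mu$ this set is \emph{finite} (bounded by $\mu(\alpha)$), so speaking of ``minimal prealgebraic pieces behaving as Morley-rank-$1$ blocks'' in $T^\mu$ is incoherent. Strong minimality of $\U$ in $T^\mu$ comes instead directly from the identity $\Mr=\dd$: a green generic has $\dd=1$, and there is a unique green generic type over any $\langle\cdot\rangle$-closed self-sufficient set by the quantifier-elimination property you already recorded. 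Similarly, your ``main obstacle'' paragraph is misplaced: the torsion/multiplicity issue is not an obstacle in the rank computation at all---it was resolved once and for all by the improved codes and Lemma~\ref{L:Kummer-strong2}, which feed into Propositions~\ref{P:8.4} and~\ref{P:9.2}. Once amalgamation and the axiomatisation of richness are in hand, the proof of $\Mr=\dd$ from \cite{BaHiMaWa09} transfers verbatim and needs no further Kummer-genericity input.
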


Indeed, as in \cite{BaHiMaWa09} one shows that $\Mr(a/B)=\dd(a/B)$ holds in any model of $T^\mu$.

\subsection{Proof of the axiomatisability of existential closedness}
We finish with the proof of Proposition \ref{P:8.4} on the axiomatisability of existential closedness. Here we follow closely the argument in \cite{BaHiMaWa09}. At 
two places we will make an essential use of the improved codes, 
namely when applying Lemma \ref{C:Kg} and Lemma~\ref{L:Kummer-strong2}.

The following lemma provides the key structural property used in the proof.

\begin{lemma}[\mbox{\cite[Folgerung 8.3]{BaHiMaWa09}}]\label{L:8.3}
Let $M\in\Cl^\mu$, and let $M\leq M'\in\Cl$ be a minimal self-sufficient extension of $M$.
\begin{enumerate}
\item Assume $M'/M$ is algebraic, green generic or white generic. Then 
$M'\in\Cl^\mu$.
\item Assume $M'/M$ is minimal prealgebraic. Then $M'\not\in\Cl^\mu$ if and only if there is a good 
code $\beta\in\Clc$ and a difference sequence  $(e_0,\dots,e_{\mu(\beta)})$ for $\beta$ in $M'$ such 
that one of the following two cases occurs:
\begin{enumerate}
\item $e_0,\dots,e_{\mu(\beta)-1}\in M$, $\langle Me_{\mu(\beta)}\rangle=M'$, and $\beta$ is the 
unique code which describes the extension $M'/M$.
\item A subsequence of $(e_0,\dots,e_{\mu(\beta)})$ of length $\mu^*(\beta)$ is a Morley sequence 
for $\phi_\beta(x,b)$ over $Mb$, where $b$ is the canonical parameter of the sequence.
\end{enumerate}
\end{enumerate}
\end{lemma}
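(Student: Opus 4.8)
The statement is \cite[Folgerung 8.3]{BaHiMaWa09}, and since the classification of minimal self-sufficient extensions \cite[Lemma 6.4]{BaHiMaWa09} and the notion of difference sequence (properties (\ref{D:versch})--(\ref{D:diff})) are untouched by the Kummer-genericity clause added to the codes in Definition~\ref{D:Kode}, the plan is to reproduce that proof, indicating only where the code properties and the lower bounds on $\mu^*$ and $\mu$ enter. The standing tools are the submodularity of the predimension $\delta$ and the two defining features of a minimal self-sufficient extension $M\leq M'$ --- that $\delta(A'/M)\geq0$ for every $\langle\cdot\rangle$-closed $A'$ with $M\subseteq A'\subseteq M'$, and that no such $A'$ lies strictly between $M$ and $M'$ --- together with the equality $\delta(M'/M)=0$ in the minimal prealgebraic case.

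For part~(1): if $M'/M$ is algebraic or white generic then $\U(M')=\U(M)$, so every green tuple of $M'$ already lies in $M$; hence a green difference sequence of a good code $\beta$ contained in $M'$ is contained in $M$, which $M\in\Clm$ forbids, so $M'\in\Clm$. If $M'/M$ is green generic then $\ldim(\U(M')/\U(M))=1$, which is too small to support a fresh minimal prealgebraic configuration of length $n_\beta\geq2$; making this precise via the predimension and code property (\ref{C:sm}) (together with property (\ref{D:Kode_dcl}), that the canonical parameter is field-algebraic over a length-$m_\beta$ subsequence) shows that a green difference sequence of $\beta$ in $M'$ must again lie entirely in $M$, so $M'\in\Clm$ here too.

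For part~(2), the direction ``$\Leftarrow$'' is immediate: each of the configurations (a) and (b) explicitly exhibits a green difference sequence of $\beta$ of length $\mu(\beta)+1$ inside $M'$, so $M'\notin\Clm$. For ``$\Rightarrow$'', suppose $M'\notin\Clm$ and fix a good code $\beta$ and a green difference sequence $(e_0,\dots,e_{\mu(\beta)})$ of $\beta$ in $M'$ with canonical parameter $b$. I would first observe that the $\langle\cdot\rangle$-closed self-sufficient hull of $M\cup\{b,e_0,\dots,e_{\mu(\beta)}\}$ inside $M'$ equals $M'$: it is not $M$, for that would put the whole sequence in $M$, contradicting $M\in\Clm$, so by minimality it is $M'$. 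Next comes the predimension analysis of the position of the sequence in $M'$: using code property (\ref{C:sm}), each $e_i$ contributes non-positively over $M$, $b$ and the previous terms, with equality exactly when $e_i$ is ``absorbed'' (in the group generated by $M$, $b$ and $e_{<i}$) or ``fresh'' ($Mbe_{<i}$-generic in $\phi_\beta(\cdot,b)$); together with $\delta(M'/M)=0$ this shows $\delta(b/M)=0$ and that each $e_i$ is absorbed or fresh. The dichotomy is then: if at most one $e_i$ is fresh, then (since the sequence is not in $M$) exactly one is, it generates $M'$ over $M$, Proposition~\ref{P:Good-Codes}(2) identifies $\beta$ as the unique code describing $M'/M$, and one is in configuration~(a); otherwise several $e_i$ are fresh, hence pairwise $Mb$-independent generics of $\phi_\beta(\cdot,b)$, and the fact that the sequence has length $\mu(\beta)+1\geq\lambda_\beta(\mu^*(\beta))+1$ lets one extract, via \cite[Lemma 7.3]{BaHiMaWa09}, a subsequence of length $\mu^*(\beta)$ that is Morley over $Mb$, i.e.\ configuration~(b).

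The main obstacle will be showing that this dichotomy is exhaustive, i.e.\ that a long difference sequence living in a minimal prealgebraic extension $M'/M$ cannot exhibit intermediate behaviour escaping both (a) and (b). This is the combinatorial heart of \cite{BaHiMaWa09}, where the quantitative form of \cite[Lemma 7.3]{BaHiMaWa09}, the finiteness of $G(\beta,\beta)$ (Fact~\ref{F:Galpha}) together with the forking rigidity (\ref{D:Kode_fork}) of difference sequences, and the lower bounds built into $\mu^*$ and $\mu$ are all brought to bear --- and none of this interacts with the green torsion or with the Kummer-genericity clause in the codes, so the argument of \cite{BaHiMaWa09} transfers unchanged.
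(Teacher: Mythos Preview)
The paper gives no proof of this lemma at all: it is stated with the attribution \cite[Folgerung 8.3]{BaHiMaWa09} and immediately followed by the proof of Proposition~\ref{P:8.4}. Your proposal is therefore in complete agreement with the paper's approach --- you correctly identify the source, observe that neither the green torsion nor the added Kummer-genericity clause in Definition~\ref{D:Kode}(\ref{C:prealg}) interferes with the combinatorics of difference sequences or the predimension bookkeeping, and conclude that the argument from \cite{BaHiMaWa09} transfers verbatim. The sketch you supply goes well beyond what the paper itself provides, but it is a faithful outline of the cited argument and would serve as a reasonable expansion for a reader without access to \cite{BaHiMaWa09}.
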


\begin{proof}[Proof of Proposition \ref{P:8.4}]
Let $\alpha \in \Clc$. Suppose $M \in \Cl^\mu$ is algebraically closed, $b \in M$ and $a$ is a green generic solution of $\phi_\alpha(x,b)$ such that $M[a] := \langle Ma\rangle$ is not in $\Cl^\mu$. Hence there exists a good code $\beta$ and a difference sequence $(e_0,\dots,e_{\mu(\beta)})$ for $\beta$ in $M[a]$. Let $b'$ be the canonical parameter of the sequence.

By Lemma \ref{L:8.3}, we may assume that either (a) $e_0,\dots,e_{\mu(\beta)-1}$ are in $M$, 
$M[a] = M[e_{\mu(\beta)}]$ and $\beta = \alpha$, or (b) there is a subsequence of $(e_0,\dots,e_{\mu(\beta)})$ of length $\mu^*(\beta)$ that is a Morley sequence for $\phi_\beta(x,b')$ over $M b'$.

In case (a), $b'$ is in $M$ and, since $M \strong M[a]$, we have that $e_{\mu(\alpha)}$ is generic in $\phi_\alpha(x,b')$ over $M$ (and green).
It follows that both tuples $a$ and $e_{\mu(\alpha)}$ are $\Q$-linear bases of $\U(M[a])$ over $\U(M)$.

Consider therefore the following condition on $b$: 

\begin{itemize}
\item[($*$)] There exist an $n$-tuple $m\in\U(M)$, a torus 
$T \in G(\alpha,\alpha)$ and a difference sequence $(e_0,\dots,e_{\mu(\alpha)-1})$ for $\alpha$ 
such that 
$T$ induces a toric correspondence between $\psi_\alpha(e_0,\dots,e_{\mu(\alpha)-1},x)$ and  $\phi_\alpha(x \cdot m, b)$. 
\end{itemize}

By code property (\ref{C:trans}), $\phi_\alpha(x \cdot m, b)$ is coded by $\alpha$ as well. Thus, by 
the finiteness of $G(\alpha,\alpha)$ (Fact~\ref{F:Galpha}), ($*$) can be expressed 
by $\theta^{(a)}(b)$, where $\theta^{(a)}(z)$ is an existential formula. Moreover, 
the Kummer genericity of $\phi_\alpha(x\cdot m,b)$ ensures that if $a'$ is a 
generic green solution of $\phi_\alpha(x\cdot m,b)$ over $M$ (equivalently, $a'\cdot m^{-1}$ is a generic green solution of $\phi_\alpha(x,b)$) then for any green tuple $e$ with $(a',e)\in T$ one 
has $\psi_\alpha(e_0,\dots,e_{\mu(\alpha)-1},e)$ (by Lemma \ref{C:Kg}). This shows that $\theta^{(a)}(b)$ holds if and 
only if we are in case (a).

In case (b), since all elements in the Morley subsequence are linearly independent over $M$, we have $\mu^*(\beta) \leq \frac{n_\alpha}{n_\beta} \leq n_\alpha$. Since $\mu^*$ is finite-to-one, there are only finitely many good codes $\beta$ for which this happens.

Express the set defined by the formula $\psi_\beta$ as a finite union  $\bigcup_{k=1}^r (V_k \setminus Z_k)$ where $V_k$ and $Z_k$ are varieties defined over $\Q$ with $Z_k \subsetneq V_k$.

Let $V_0 = V_{\alpha}(x,b)$ be the Zariski closure of $\phi_\alpha(x,b)$. This is an irreducible variety by code property (\ref{C:prealg}), and so it is equal to the locus of $a$ over $M$. Fix $k \in \{1,\dots,r\}$ such that $(e_0,\dots,e_{\mu(\beta)})\in V_k\setminus Z_k$, and let $W$ be the locus of 
$(a,e_0,\dots,e_{\mu(\beta)})$ over $M$. So $W$ is a subvariety of $V := V_0 \times V_k$. Let $\{T_0,\dots,T_s\} = \mathcal{T}(V_\alpha(x,z) \times V_k)$.

Note that $\cd(W) = \cd(V_0) = k_\alpha$. Indeed, since $M$ is algebraically closed and all the elements in the tuples $e_0,\dots,e_{\mu(\beta)}$ are from $M[a]$, we have:
\begin{align*}
\cd(W) 
&= \ldim(a,e_0,\dots,e_{\mu(\beta)}/M) - \trdeg(a,e_0,\dots,e_{\mu(\beta)}/M)\\
&= \ldim(a/M) - \trdeg(a/M)\\
&= \cd(V_0).
\end{align*}
 
Let $T$ be the minimal torus of $W$ and $m \in W(M)$ be such that $W \subset m T$.

Let $W'$ be a maximal irreducible subvariety of $V$ defined over $M$ with $W \subseteq W'$ and $\cd(W') = \cd(W)$. The variety $W'$ is hence $\cd$-maximal in $V$, its minimal torus is therefore some $T_{\tau} \in \mathcal{T}(V_\alpha(x,z) \times V_k)$, and $W' \subseteq m T_\tau$.

Let $(a^*,e_0^*,\dots,e_{\mu(\beta)}^*)$ be a generic of $W'$ over $M$. From $\cd(W') = \cd(W) = \cd(V_0)$,  we get
\begin{align*}
\ldim(a^*/M)-\trdeg(a^*/M)&= \ldim(a/M)-\trdeg(a/M)\\
&=\ldim(a^*,e_0^*,\dots,e_{\mu(\beta)}^*/M) -\trdeg(a^*,e_0^*,\dots,e_{\mu(\beta)}^*/M).
\end{align*}
Therefore, 
\[
\ldim(e_0^*,\dots,e_{\mu(\beta)}^*/Ma^*)=
\trdeg(e_0^*,\dots,e_{\mu(\beta)}^*/Ma^*)=:\ell.
\]

We now choose a linear basis $f_0,\dots,f_{\ell-1}$ over $Ma^*$ from the elements of the tuple $(e^*_0,\dots,e^*_{\mu(\beta)})$. The elements $f_0,\dots,f_{\ell-1}$ are hence algebraically independent over $Ma^*$. 

That the coset $m T$ contains the green tuple $(a,e_0,\dots,e_{\mu(\beta)})$ implies that if 
we describe $m T$ by equations $\{\prod_{i=1}^n x_i^{n_{ij}}=c_j\}$, then the $c_j$ are all green. 
(Note that $c_j=\prod m_i^{n_{ij}}$.) As $mT_\tau\supseteq mT$, the same is true for a set of equations defining $mT_\tau$.
 It is then easy to see that there is a structure $N$ in $\Cl$ extending $M$ and with domain $N = \langle M a^*,f_0,\dots,f_{\ell-1}\rangle = \langle Ma^*,e^*_0,\dots,e^*_{\mu(\beta)}\rangle$ such that the tuple $(a^*,e^*_0,\dots,e^*_{\mu(\beta)})$ is green and $(a^*,f_0,\dots,f_{\ell-1})$ is a linear basis of $\U(N)$ over $\U(M)$.

For $0\leq j\leq \ell-1$, let $F_j:=\langle Ma^*, f_0,\ldots, f_{j-1}\rangle$ and observe that each extension $F_j\leq F_{j+1}$ is a green generic extension. 
Applying Lemma~\ref{L:8.3} repeatedly we get: $M[a^*]\in\Clm$ if and only if $N\in\Clm$.
Also, by Lemma~\ref{L:Kummer-strong2}, the map $a \mapsto a^*$ extends to an isomorphism 
over $M$ between $M[a]$ and $M[a^*]$. Thus, 
$$\text{$M[a]\in\Clm$ if and only if $N\in\Clm$}.$$
Now both $(e_0,\ldots,e_{\mu(\beta)})$ and $(e^*_0,\ldots,e^*_{\mu(\beta)})$ lie in $V_k$. And, since $(e_0,\ldots,e_{\mu(\beta)})$ is a specialisation of $(e^*_0,\ldots,e^*_{\mu(\beta)})$ and $(e_0,\ldots,e_{\mu(\beta)}) \not\in Z_k$, also $(e^*_0,\ldots,e^*_{\mu(\beta)}) \not\in Z_k$. Hence 
$(e^*_0,\ldots,e^*_{\mu(\beta)})$ realises $\psi_\beta$. 

Thus, in case (b), we have shown that the existence of a green difference sequence for $\beta$ of length $\mu(\beta)+1$ in $M[a]$ implies the existence of one such particular difference sequence in $N$, and, conversely, that the existence of such a difference sequence $(e^*_0,\ldots,e^*_{\mu(\beta)})$ in $N$ implies that $M[a]$ is not in $\Clm$.

Consider therefore the following condition on $b$: there is a tuple $m$ from $\U(M)$ and an irreducible component $W'$ of $V\cap m T_\tau$ (where $V$ is as above) such that:
\begin{enumerate}
\item If the coset $ m T_\tau$ is given by $\{\prod  x_i^{n_{ij}}=c_j\}_{1\leq j\leq t}$, then 
all $c_j$ are green;
\item $W'$ projects generically onto $V_0$;
\item $\cd(W')=\cd(V_0)$, and
\item for generic $(a^*,e^*_0,\ldots,e^*_{\mu(\beta)})$ in $W'$, 
$\psi_{\beta}(e^*_0,\ldots,e^*_{\mu(\beta)})$ holds.
\end{enumerate}
Note that the condition can be expressed by an existential sentence with parameters from $b$. Let $\theta^{(b)}(b)$ be the disjunction over all $\beta \in \Clc$ with $\mu^*(\beta) \leq n_\alpha$, all 
$k \in \{1,\dots,r\}$ and all $\tau \in \{1,\dots,s\}$ of these sentences.

Now $\chi_\alpha=\forall z\,[\neg\theta_\alpha(z)\vee\theta^{(a)}(z)\vee\theta^{(b)}(z)]$ is $\forall\exists$ and does 
the job.
\end{proof}

\bibliography{Bad-Torsion}

\end{document}